\titleformat{\subsection}{\normalfont\scshape\filcenter}{\thesubsection}{1em}{}
\titleformat{\subsubsection}[runin]{\normalfont\bfseries}{\thesubsubsection. }{.2em}{}[.]
\newcommand\thefontsize[1]{{#1 The current font size is: \f@size pt\par}}
\newtheorem{lem}{Lemma}[section]
\newtheorem{define}{Definition}[section]
\newtheorem{remark}{Remark}[section]
\newtheorem{cor}[lem]{Corollary}
\newcommand{\R}{{\mathbb R}}
\newcommand{\N}{{\mathbb{N}}}
\newcommand{\Q}{{\mathbb Q}}
\newcommand{\eps}{\varepsilon}
\newcommand{\al}{\alpha}
\newcommand{\lra}{\leftrightarrow}
\newcommand{\p}{\partial}
\newcommand{\pt}{\tilde{p}}
\newcommand{\ue}{\underline{E}}
\newcommand{\inner}{\text{inner}}
\DeclareMathOperator{\GP}{GP}
\newcommand{\easy}{\textsc{easy}}
\newcommand{\hard}{\textsc{hard}}
\newcommand{\rel}{\textsc{relevant}}
\newcommand{\uz}{\underline{z}}
\newcommand{\ux}{\underline{x}}
\newcommand{\Qbig}{Q_{\textrm{big}}}
\newcommand{\Qsmall}{Q_{\textrm{small}}}
\newcommand{\Qinner}{Q_{\textrm{inner}}}
\newcommand{\norm}[1]{\Vert #1 \Vert}
\newcommand{\hL}{\hat{L}}
\newcommand{\tF}{\tilde{F}}
\DeclareMathOperator{\diam}{diam}
\DeclareMathOperator{\dist}{dist}
\DeclareMathOperator{\supp}{supp}
\numberwithin{equation}{section}
\newcommand{\Ecal}{\mathcal{E}}
\newcommand{\Mcal}{\mathcal{M}}
\DeclareMathOperator{\Av}{Av}
\DeclareMathOperator{\Gp}{Gp}
\DeclareMathOperator{\Gang}{Gang}
\DeclareMathOperator{\CZ}{CZ}
\DeclareMathOperator{\DQ}{DQ}
\newtheorem{theorem}{Theorem}
\newcommand{\vertiii}[1]{{\left\vert\kern-0.25ex\left\vert\kern-0.25ex\left\vert #1 
    \right\vert\kern-0.25ex\right\vert\kern-0.25ex\right\vert}}
\newcommand{\triplenorm}[1]{\vertiii{#1}}
\title{Sobolev extension in a simple case}
\author{Marjorie Drake\thanks{mkdrake@mit.edu, supported by NSF Award No. 2103209.}}
\author{Charles Fefferman\thanks{cf@math.princeton.edu, supported by the AFOSR through the grant FA9550-23-1-0273.}}
\author{Kevin Ren \thanks{kr5621@math.princeton.edu, supported by a NSF GRFP fellowship.}}
\author{Anna Skorobogatova \thanks{as110@princeton.edu, supported by the NSF through the grant FRG-1854147.}}
\affil{\textsuperscript{$*$} Department of Mathematics, Massachusetts Institute of Technology, 77 Massachusetts Ave., Cambridge,
MA 02139, USA}
\affil{\textsuperscript{$\dagger,\ddagger,\S$} Department of Mathematics, Fine Hall, Princeton University, Washington Road, Princeton, NJ 08540, USA}
\date{} 
\begin{document}
\maketitle

\centering Dedicated to Bob Fefferman
\flushleft
\vspace{20pt}

\begin{abstract}
    In this paper, we establish the existence of a bounded, linear extension operator $T: L^{2,p}(E) \to L^{2,p}(\mathbb{R}^2)$ when $1<p<2$ and $E$ is a finite subset of $\mathbb{R}^2$ contained in a line.
\end{abstract}

MSC code: 46E35

keywords: Sobolev extension, linear extension, Whitney extension, Calder\'{o}n-Zygmund decomposition

\tableofcontents

\section{Introduction}
\label{sec:intro}

Continuing from \cite{arie3,arie4,arie5,arie6,arie7,arie8}, we study the \emph{Sobolev extension problem}: given a finite set $E\subset \R^n$ and a function $f:E\to \R$, our task is to find a function $F$ in the homogeneous Sobolev space $L^{m,p}(\R^n)$ satisfying $F=f$ on $E$ such that $\Vert F\Vert_{L^{m,p}(\R^n)}$ has the least possible order of magnitude. Moreover, we want to compute the order of magnitude of $\Vert F\Vert_{L^{m,p}(\R^n)}$. Here, $L^{m,p}(\R^n)$ denotes the homogeneous Sobolev spaces consisting of functions $F$ on $\R^n$ whose distributional derivatives of order $m$ belong to $L^p$, with seminorm 
\[
    \Vert F\Vert_{L^{m,p}(\R^n)} := \left(\sum_{|\alpha|=m} \int_{\R^n} |\partial^\alpha F|^p\right)^{1/p}.
\]
Notice that when $p>\frac{n}{m}$, any function $F\in L^{m,p}(\R^n)$ is continuous, so the restriction $F|_E$ to any finite subset $E\subset \R^n$ is well-defined. Thus, our requirement that $F=f$ on $E$ makes sense for any choice of $f$ and our problem is well-posed provided that $p>\frac{n}{m}$.

In the narrower parameter range $p>n$, the problem is well-understood; see Israel \cite{arie5}, Fefferman-Israel-Luli \cite{arie3,arie4} and Drake \cite{Drake}. We recall the relevant results as follows. Let $L^{m,p}(E)$ denote the space of functions $f:E\to\R$ equipped with the seminorm
\[
    \Vert f \Vert_{L^{m,p}(E)} := \inf\{ \Vert F \Vert_{L^{m,p}(\R^n)} : F\in L^{m,p}(\R^n), \ \text{$F=f$ on $E$}\}.
\]

{ For $p>n$ and $E \subset \R^n$ finite, there exist a linear map $T: L^{m,p}(E) \to L^{m,p}(\R^n)$, and linear functionals $\ell_1, ..., \ell_{\nu_{\max}}: L^{m,p}(E) \to \R$, with the following properties:
\begin{itemize}
    \item[(1)] $Tf=f \text{ on }E \text{ and } \Vert Tf\Vert_{L^{m,p}(\R^n)}\leq C_{m,n,p} \Vert f\Vert_{L^{m,p}(E)} \text{ for all }f\in L^{m,p}(E).$
    \item[(2)] Let $\vertiii{f}_p:= \left(\sum_{\nu=1}^{\nu_{\max}} |\ell_\nu(f)|^p\right)^{1/p}$. Then
\begin{align*}
    &c_{m,n,p} \vertiii{f}_p \leq \Vert f \Vert_{L^{m,p}(E)} \leq C_{m,n,p} \vertiii{f}_p \text{ for all }f\in L^{m,p}(E).
\end{align*}
Here, the constants $C_{m,n,p}$ and $c_{m,n,p}$ depend only on $m,n,p$, the parameters specifying the Sobolev space.
\item[(3)] 
Moreover, the number $\nu_{\max}$ of linear functionals $\ell_\nu$ grows only linearly with $N$, the number of points of $E$, and the $\ell_{\nu}$ have a sparse structure that lets us evaluate $\vertiii{f}_p$ in $O(N)$ computer operations. The functionals $\ell_\nu$ and the operator $T$ can be computed efficiently. See \cite{arie3} for details.
\end{itemize}
}

When $\frac{n}{m} < p \leq n$, the proofs of the above results break down, and we know almost nothing. In this paper, we analyze the simplest non-trivial case. We work in the homogeneous Sobolev space $L^{2,p}(\R^2)$ for $1<p<2$. We suppose our finite set $E$ is contained in a line. Let $(x,y)$ denote the canonical orthogonal coordinates in $\R^2$; by rotating our coordinates, we may assume that $E$ lies on the $x$-axis. Our main results are as follows.

Fix $p$ ($1<p<2$). Let $E\subset \R\times \{0\}\subset \R^2$ be finite, with $N=\# E \geq 2$. We write $c, C, C',$ etc. to denote absolute constants, and we write $c_p, C_p, C_p'$, etc. to denote constants depending only on $p$.

\begin{theorem} \label{t:main1}
    There exists a linear map $T:L^{2,p}(E)\to L^{2,p}(\R^2)$ such that $Tf=f$ on $E$ and $\Vert Tf \Vert_{L^{2,p}(\R^2)}\leq C_p \Vert f \Vert_{L^{2,p}(E)}$ for every $f\in L^{2,p}(E)$. Moreover, we can take $T$ independent of $p$, such that for each point $z \in \R^2$, the value of $Tf$ at $z$ is determined by the value of $f$ at at most $C$ points of $E$.
\end{theorem}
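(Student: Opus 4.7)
The plan is to exploit the placement of $E$ on a single line to reduce the two-dimensional extension problem to a one-dimensional interpolation problem on the $x$-axis, then lift the result back to $\R^2$ by a smooth construction. Since the operator $T$ must be $p$-independent, both the interpolation and the lifting will be defined without reference to $p$; the $p$-dependence will appear only in the norm estimates.

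First, I would construct a $p$-independent linear operator $S: \R^E \to C^1(\R)$ with $(Sf)(x_i) = f(x_i)$, by piecewise cubic Hermite interpolation: on each gap $(x_i, x_{i+1})$ of length $h_i = x_{i+1}-x_i$, let $Sf$ be the unique cubic with values $f(x_i), f(x_{i+1})$ and prescribed slopes $m_i, m_{i+1}$, where each $m_i$ is a fixed linear combination of values $f(x_j)$ at a bounded number of neighboring indices $j$ (e.g.\ symmetric divided differences). Outside $[x_1, x_N]$, extend $Sf$ by a $p$-independent rule making the support compact. Then I would lift $g := Sf$ from the $x$-axis to $\R^2$ by a smooth construction
\[
F(x,y) := g(x)\,\chi(y) + y\,g'(x)\,\eta(y),
\]
where $\chi, \eta \in C_c^\infty(\R)$ are fixed bumps with $\chi(0) = 1$, and $\eta$ is chosen so that $F|_{y=0} = g$ and so that the pointwise second derivatives of $F$ are linear combinations of $g, g', g''$ with smooth compactly supported coefficients in $y$. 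To keep scales correct near each gap, I would use a partition of unity in $x$ to localize this lifting, with bumps of width comparable to the local scale $h_i$, so that each vertical strip over the gap $(x_i, x_{i+1})$ contributes a term of the correct order to the $L^{2,p}(\R^2)$ seminorm. After subtracting the affine part of $g$ on each local strip (which is annihilated by the $L^{2,p}$ seminorm), the resulting bound is $\|F\|_{L^{2,p}(\R^2)}^p \leq C_p \sum_i h_i\, |D^2_i f|^p$, where $D^2_i f$ denotes an appropriately normalized second-order divided difference of $f$ at $x_i$.

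To finish the upper bound, I would compare this quantity to $\|f\|_{L^{2,p}(E)}$ by taking any near-optimal extension $\tilde F \in L^{2,p}(\R^2)$ with $\tilde F|_E = f$; by the trace theorem for $L^{2,p}(\R^2)$ on the $x$-axis, the trace $\tilde g := \tilde F|_{y=0}$ satisfies a Besov-type bound controlled by $\|\tilde F\|_{L^{2,p}(\R^2)}$. Since $\tilde g(x_i) = f(x_i)$, one then shows that $\sum_i h_i |D^2_i f|^p$ is bounded in terms of this trace norm, giving the desired inequality.

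The main obstacle will be producing a single $p$-independent construction whose norm lies within a factor $C_p$ of the optimum for every $p \in (1, 2)$ simultaneously. The range $n/m < p \leq n$ (here $n = m = 2$) is below the Hilbertian threshold $p = 2$ and also below the range $p > n$ covered by Israel and Fefferman--Israel--Luli; those methods rely on $p$-dependent Calder\'on--Zygmund decompositions to separate ``easy'' and ``hard'' scales in the data. Even with the simplification that $E$ lies on a line, I expect the bulk of the technical effort to go into showing that the fixed choice of slopes $m_i$ in $S$ yields a uniform bound across all $p$ in the range, likely requiring a refined scale-adapted decomposition of $E$ tailored to the linear structure and a careful interpolation argument across the $p$-range.
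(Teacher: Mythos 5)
Your high-level plan (interpolate on the line, lift to $\R^2$) is reasonable as an \emph{Ansatz}, but it leaves the actual substance of the theorem unaddressed, and you acknowledge as much in your last paragraph. The paper does not reduce to a one-dimensional interpolation-plus-lifting at all; instead it performs a Calder\'on--Zygmund decomposition of the plane, stopping at squares $Q$ with $\#(E\cap 3Q)\le 1$, and patches together first-degree polynomials $L_Q$ (see \eqref{eqn:norm14}) via a Whitney partition of unity. The entire difficulty is concentrated in the choice of the slope $\partial_x L_Q$ when $Q$ is ``\hard{}'' --- i.e., when the nearby points of $E$ cluster at a scale $\ll\delta_Q$ while the next point $z^*$ is at distance $\gg\delta_Q$. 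The paper's key new idea is to take the slope from the difference quotient involving the far point $z^*$, and to organize all such $Q$ into groups $\GP(z^*)$ whose squares form a nested chain $Q_0\subset Q_1\subset\cdots\subset Q_L$ (Lemma \ref{lem:9.4}). Your proposal to take $m_i$ to be a ``symmetric divided difference over a bounded number of neighboring indices'' is not obviously wrong, but it is exactly the step that has to be justified, and it is not a safe default: the paper's introduction explains that for $1<p<2$ the local difference quotient $\DQ(z_1,z_2)$ in the clustered scenario is \emph{not} a valid approximation to $\partial_x F$ on $Q$, unlike in the $p>2$ case, and a fixed-width local rule does not automatically produce the far-point slope on the many intermediate scales between the cluster size and $\dist(z_1,z^*)$.

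The deeper gap is the lower bound. Invoking the trace theorem to identify $\Vert f\Vert_{L^{2,p}(E)}$ with a restricted Besov seminorm on the line merely reformulates the problem; showing that the Besov seminorm of a near-optimal trace controls a discrete quantity $\sum_i h_i|D^2_if|^p$ (with a \emph{fixed}, $p$-independent choice of stencils) is where all the difficulty lives, and is precisely what the paper's machinery is built to do. Concretely, to prove that any competitor $F$ must pay $\gtrsim\delta_Q^{2-p}|\Av_Q\partial_xF-\DQ(z_1,z^*)|^p$ on each hard square, one needs a Sobolev inequality on a square $Q^*$ reaching out to $z^*$, and since $Q^*\gg Q$ these large squares overlap unboundedly as $Q$ varies. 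The paper avoids this double-counting by summing along the chain $Q_0\subset\cdots\subset Q_L$ with the weighted telescoping inequality of Lemma \ref{lem:s1}, $\sum_\ell\bigl|\sum_{k\ge\ell}x_k\bigr|^p 2^{(2-p)\ell}\le C_p\sum_\ell|x_\ell|^p 2^{(2-p)\ell}$, which is valid \emph{only} because $p<2$ makes the weight $2^{(2-p)\ell}$ geometrically increasing. Nothing in your proposal plays this role, and the trace-theorem framing does not make it disappear. As a side note, the paper's extension is $p$-independent not because of a careful interpolation argument across the $p$-range, but because the CZ stopping rule, the \easy{}/\hard{} dichotomy, the groups $\GP(z^*)$, and the linear functionals $\ell_\nu$ all depend only on the geometry of $E$, never on $p$; the $p$-dependence enters only in the constants of the Sobolev and maximal-function estimates.
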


Further, under the same assumptions as in Theorem \ref{t:main1}, we can efficiently compute the order of magnitude of $\Vert f \Vert_{L^{2,p}(E)}$:
\begin{theorem}\label{t:main2}
    There exist positive constants $\lambda_1,\dots,\lambda_{\nu_{\max}}\in \R$ and linear functionals $\ell_1,\dots,\ell_{\nu_{\max}}:L^{2,p}(E)\to \R$ such that the quantity $\vertiii{f}_p:= \left(\sum_{\nu=1}^{\nu_{\max}} \lambda_\nu |\ell_\nu(f)|^p\right)^{1/p}$ satisfies
    \[
        c_p \vertiii{f}_p \leq \Vert f \Vert_{L^{2,p}(E)} \leq C_p \vertiii{f}_p
    \]
    for all $f\in L^{2,p}(E)$. 
    
    Moreover, we can take  $\lambda_\nu, \ell_\nu$ independent of $p$ such that $\nu_{\max} \leq CN$ for $N = \# E$, and for each $\nu$, the quantity $\ell_\nu(f)$ is determined by the value of $f$ at at most $C$ points of $E$.
\end{theorem}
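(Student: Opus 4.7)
The plan is to assign to $E$ a Calder\'on--Zygmund (CZ) decomposition of the $x$-axis, attach to each CZ interval a canonical local polynomial jet determined by $O(1)$ nearby points of $E$, and take the $\ell_\nu(f)$ to be suitably-rescaled jet discrepancies between adjacent CZ intervals. Concretely, partition an interval containing $E$ into $\{I_\nu\}_{\nu=1}^{\nu_{\max}}$ with $\nu_{\max} = O(N)$, adjacent intervals of comparable length, and $|I_\nu^{**} \cap E| \leq C$ for a fixed enlargement $I_\nu^{**}$. For each $\nu$ form a canonical $P_\nu \in \mathcal{P}_1$ interpolating or least-squaring $f$ at the $O(1)$ points of $E$ near $I_\nu$, so that $P_\nu$ depends linearly on at most $C$ samples of $f$. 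Let $D_\nu$ be the jet discrepancy between $P_\nu$ and $P_{\nu+1}$ at a natural interface, and set $\ell_\nu(f) := D_\nu/|I_\nu|^2$ and $\lambda_\nu := |I_\nu|^2$, so that $\lambda_\nu|\ell_\nu(f)|^p = |I_\nu|^{2-2p}|D_\nu|^p$ reproduces the correct local Sobolev scaling while both $\lambda_\nu$ and $\ell_\nu$ are manifestly $p$-independent and sparsely supported.

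For the upper bound $\|f\|_{L^{2,p}(E)} \leq C_p \triplenorm{f}_p$, I would build an extension by patching the $P_\nu$'s on the $x$-axis via a Whitney partition of unity subordinate to $\{I_\nu\}$ and lifting to $\R^2$ with a smooth bump in the $y$-direction. The second derivatives of the resulting $F$ concentrate on overlaps between consecutive patches, where they are of order $|D_\nu|/|I_\nu|^2$, coming from partition-of-unity derivatives acting on $P_\nu - P_{\nu+1}$. A direct $L^p$ computation on each square based on $I_\nu$ (area $\approx |I_\nu|^2$) then yields $\|F\|_{L^{2,p}(\R^2)}^p \leq C_p \sum_\nu |I_\nu|^{2-2p}|D_\nu|^p = C_p \sum_\nu \lambda_\nu |\ell_\nu(f)|^p$. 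Alternatively one can simply analyze $Tf$ from Theorem \ref{t:main1} and extract the same local estimate from its construction.

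For the lower bound, given any $F \in L^{2,p}(\R^2)$ with $F|_E = f$, use that the trace of $L^{2,p}(\R^2)$ on the $x$-axis lies in the fractional Sobolev space $W^{2-1/p,p}(\R)$, which embeds into $C^0(\R)$ for $1<p<2$ since $2-1/p > 1/p$; hence $g := F|_{\R\times\{0\}}$ is continuous and agrees with $f$ on $E$. On each enlarged square $Q_\nu^{**} \subset \R^2$ with base $I_\nu^{**}$, approximate $g$ on $I_\nu^{**}$ by a best-affine fit via a Poincar\'e--Sobolev inequality for the trace, with error whose $p$-th power is bounded by a geometric power of $|I_\nu|$ times $\|F\|_{L^{2,p}(Q_\nu^{**})}^p$. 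Since the best-affine fit is determined by $g$ on $I_\nu^{**}$, while $P_\nu$ is determined by $g$-values at the $O(1)$ points of $E \cap I_\nu^{**}$, the two nearly coincide. Writing $D_\nu$ as a combination of such best-fit discrepancies at $\nu$ and $\nu+1$ and using bounded overlap of the $Q_\nu^{**}$ gives $\sum_\nu \lambda_\nu|\ell_\nu(f)|^p \leq C_p \|F\|_{L^{2,p}(\R^2)}^p$, as desired.

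I expect the main obstacle to be the lower bound, specifically producing the Poincar\'e/trace estimate with the sharp scaling and in terms of $f$-values at the $O(1)$ points of $E$ rather than integrals of $F$. Working below the Sobolev embedding threshold for continuity ($p<n=2$) forces all pointwise arguments about $F$ to pass through the trace $g \in W^{2-1/p,p}(\R)$, and the fractional regularity has to be tracked carefully scale by scale so that the weights $\lambda_\nu$ and functionals $\ell_\nu$ end up purely geometric and $p$-independent. The CZ decomposition on the line, rather than on $\R^2$, is chosen precisely to mediate this: it partitions the trace line at scales determined by $E$, so that the $W^{2-1/p,p}$ seminorm of any admissible trace decomposes cleanly into contributions matching the rescaled local jet gaps.
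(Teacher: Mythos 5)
Your outline correctly identifies that the functionals should be rescaled jet discrepancies $D_\nu$ between local first-degree fits $P_\nu$ attached to CZ intervals, that the upper bound follows from Whitney patching, and that the $\lambda_\nu,\ell_\nu$ should be $p$-independent and sparsely supported. Where the proposal has a genuine gap is in the lower bound, and it is precisely the gap the paper's $\GP(z^*)$ machinery is built to close. Your lower bound argument claims that $P_\nu$ (determined by $f$-values at the $O(1)$ points of $E$ near $I_\nu$) ``nearly coincides'' with a best-affine fit to the trace $g$ on an $O(1)$-enlargement $I_\nu^{**}$, with error controlled by the local $W^{2-1/p,p}$ seminorm on $I_\nu^{**}$. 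But consider a CZ interval $I_\nu$ whose nearby points of $E$ are a pair $z_1,z_2$ with $|z_1-z_2|\ll |I_\nu|$ (the ``\hard'' scenario of Figure \ref{fig:cz-hard}). Then the slope of $P_\nu$ is the difference quotient $\DQ(z_1,z_2)=\frac{1}{x_2-x_1}\int_{x_1}^{x_2}g'$, a short-scale average of $g'$, while the best-affine fit's slope is a long-scale average of $g'$ over $I_\nu^{**}$. Since $1<p<2$ forces $1-\tfrac1p<\tfrac1p$, $g'\in W^{1-1/p,p}$ is not continuous, and the difference of these two averages is \emph{not} bounded by the local seminorm on a bounded enlargement; rather it is a telescoping sum of oscillations of $g'$ over all intermediate dyadic scales between $|z_1-z_2|$ and $|I_\nu|$. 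Aggregating these multi-scale contributions over $\nu$ with the weight $|I_\nu|^{2-p}$, while retaining a bound by $\|F\|_{L^{2,p}}^p$, is exactly what requires the additional structure of the paper: the grouping of CZ squares into $\GP(z^*)$ (Lemmas \ref{lem:9.1}--\ref{lem:9.5}), the anchoring of the local slope to the distant point $z^*$ via $z_1(\Qbig(z^*)),z_2(\Qbig(z^*))$ (Section \ref{sec:special pts}), the arrangement that adjacent \hard{} squares in the same group have identical jets so their discrepancy summand vanishes (just before Lemma \ref{lem:norm}), and the geometric-series estimate of Lemma \ref{lem:s1} that makes the telescoping sum converge when $p<2$.

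Two further remarks. First, your claim that the best-affine-fit comparison ``nearly'' holds and the quoted bounded-overlap summation are the steps that silently fail; without the group construction the enlarged squares you would need in the \hard{} case are not of bounded overlap, since $z^*$ may be arbitrarily far from $I_\nu$ relative to $|I_\nu|$. Second, your choices to run the CZ decomposition on the line rather than in $Q^0\subset\R^2$ and to phrase the lower bound through the trace space $W^{2-1/p,p}(\R)$ are cosmetic: by Remark \ref{r:contact} the planar CZ squares project onto a nested family of shadow intervals anyway, and the paper's direct route through the maximal function and Corollaries \ref{cor:t1}--\ref{cor:t4} avoids invoking trace theory. Neither change addresses the real obstacle, which is the multi-scale telescoping in the \hard{} case.
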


The remainder of the paper is dedicated to the proof of Theorem \ref{t:main1} and Theorem \ref{t:main2}.

\begin{remark}
    The conclusions of Theorem \ref{t:main1} and Theorem \ref{t:main2} remain open for an arbitrary choice of finite subset $E\subset \R^2$. See Carruth-Israel \cite{CI23} for a class of examples in which $E$ is contained in a union of two closely-spaced parallel lines and substantial new issues arise. See also Fefferman-Klartag \cite{FK23}.
\end{remark}

Before proceeding, let us first outline an important issue arising in the proof of Theorem \ref{t:main1} and Theorem \ref{t:main2} and the corresponding key new ingredient introduced here to deal with this. Note that this discussion is merely an overview; see the relevant succeeding sections for a more precise description. Fix $1<p<2$ and suppose we are given $f : E \to \R$. We will make a Calder\'{o}n-Zygmund (henceforth abbreviated by CZ) decomposition in the plane. Figure \ref{fig:CZ} depicts two possible scenarios for a CZ square $Q$, whose distance to the $x$-axis is comparable to its side length $\delta_Q$, together with nearby points in $E$. In Figure \ref{fig:cz-easy}, $|z_1 - z_2|$ is comparable to $\delta_Q$, while in Figure \ref{fig:cz-hard}, $|z_1-z_2|$ is much smaller than $\delta_Q$, while $|z^*-z_1|$ is much larger than $\delta_Q$.

\begin{figure}[h]
\caption{Two possible scenarios for a CZ square $Q$.}
\label{fig:CZ}
\begin{subfigure}{.35\textwidth}
\centering
\includegraphics[width=1.06\textwidth]{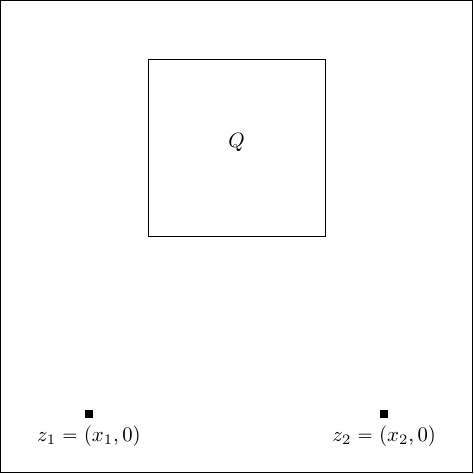}
\caption{$z_1,z_2\in E$ near $Q$ with $|z_1 - z_2|\simeq \delta_Q$.}
\label{fig:cz-easy}
\end{subfigure}
\begin{subfigure}{.7\textwidth}
\centering\includegraphics[width=0.8\textwidth]{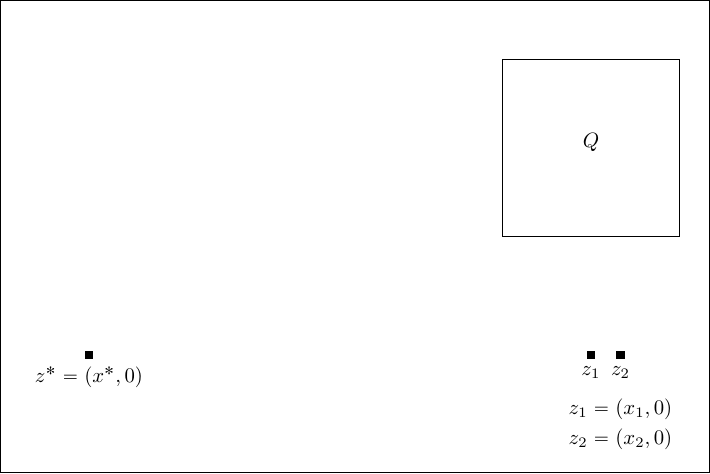}
\caption{$z_1,z_2,z^*\in E$ near $Q$ with $|z_1 - z_2|\ll \delta_Q$ and $|z^*-z_1|\gg\delta_Q$.}
\label{fig:cz-hard}
\end{subfigure}
\end{figure}
More precisely, the points $z_1,z_2, z^*$ belong to $E$ and all other points of $E$ lie much farther away from $Q$ than those depicted. Let $F\in L^{2,p}(\R^2)$ satisfy $F=f$ on $E$ with $\Vert F\Vert_{L^{2,p}(\R^2)}$ being of least possible order of magnitude. What can we infer about the behavior of $F$ on $Q$? In particular, what can we say about $\partial_x F$ on $Q$?

If $z_1,z_2$ and $Q$ are as in Figure \ref{fig:cz-easy}, one expects that $\partial_x F |_Q$ is closely approximated by the difference quotient
\[
    \DQ(z_1,z_2) := \frac{f(z_2)-f(z_1)}{x_2-x_1}.
\]
The comparability of $|z_1-z_2|$ to $\delta_Q$ combined with a Sobolev inequality allows one to control \\ $\Vert\partial_x F - \DQ(z_1,z_2)\Vert_{L^p(Q)}$ by the product $\delta_Q \cdot \Vert \nabla^2 F\Vert_{L^p(CQ)}$, where $CQ$ is the dilation of $Q$ about its center by an appropriate constant factor $C>1$ such that $CQ$ contains $z_1, z_2$.

On the other hand, suppose that $z_1,z_2,z^*$ and $Q$ are as in Figure \ref{fig:cz-hard}. This time, since $|z_1-z_2|\ll \delta_Q$, $\DQ(z_1,z_2)$ is not an effective approximation for $\partial_x F|_Q$. Instead, the difference quotient $\DQ(z_1,z^*)$ is a better choice of approximant for $\partial_x F|_Q$. We may again control $\Vert\partial_x F - \DQ(z_1,{z^*})\Vert_{L^p(Q)}$ by the product $\delta_Q \cdot \Vert \nabla^2 F\Vert_{L^p(Q^*)}$ for a square $Q^*$ containing $z_1,z^*$, but this time $Q^*$ is much larger than $Q$, since $|z^*-z_1| \gg \delta_Q$. Therefore, in order to avoid having unbounded overlaps when summing over all squares $Q$, we cannot simply consider a single such square $Q$ associated to $Q^*$.

Instead, for a fixed $z^* \in E$, we group together all CZ squares $Q$ for which a scenario like that in Figure \ref{fig:cz-hard} holds for $z^*$. We denote this subcollection of the CZ squares by $\GP(z^*)$. Thanks to a lemma in Section \ref{sec:gps}, the structure of the squares in $\GP(z^*)$ looks like that depicted in Figure \ref{fig:Gp}. 
% \note{In the Figure, maybe we should have $\#(3Q_1 \cap E) \leq 1$? Would it be difficult to move the cubes a bit away from the set $E$?}{\color{purple}A: hopefully this is better? M: Looks nice; thanks for doing this}

\begin{figure}[h]
\centering\includegraphics[width=0.5\textwidth]{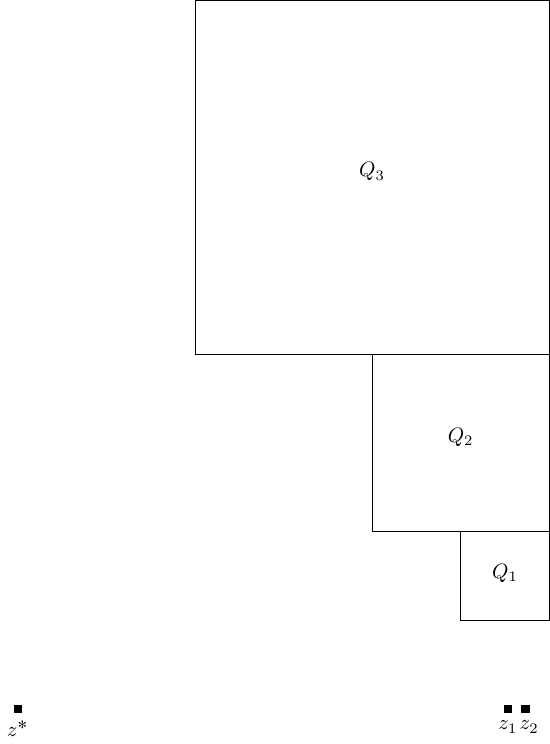}
\caption{CZ squares $Q_1,Q_2,Q_3\in \GP(z^*)$ for a point $z^*\in E$.}
\label{fig:Gp}
\end{figure}

This allows us to estimate
\[
    \sum_{Q\in\GP(z^*)} \delta_Q^{-p} \Vert \partial_x F - \DQ(z_1,z^*) \Vert_{L^p(Q)}^p
\]
by $\Vert \nabla^2 F\Vert_{L^p(Q^*)}^p$ for a fixed $z^*\in E$. This is a fundamental difference between the case $1<p<2$ in consideration here, and the aforementioned case $p>2$. Namely, when $p>2$ we can make use of the difference quotient $\DQ(z_1,z_2)$ even in the setting of Figure \ref{fig:cz-hard}, without taking into account $z^*$. This allows us to study each CZ square $Q$ in isolation, then sum over all $Q$ together.

So our main new ingredient is the introduction of $\GP(z^*)$. By making a translation and dilation of $E$, we may assume without loss of generality that $E=\underline{E}\times\{0\}\subset \R^2$ for a finite subset $\underline{E}$ with $\max \underline{E} = 2^{-11}$ and $\min\underline{E}=-2^{-11}$. We then work with the set $E^+=E\cup\{(-1,0)\}$, so that we may guarantee the existence of a point $z^*\in E$ as in Figure \ref{fig:cz-hard} to the left of any CZ square $Q$. At the end of the proof in Section \ref{sec:rempt}, we remove this additional point $(-1,0)$.

\subsection*{Acknowledgments}
We are grateful to Alper Gunes, Jonathan Marty, Arie Israel, Jacob Carruth and Ignacio Uriarte-Tuero for valuable conversations. We are grateful to the NSF and the AFOSR for their generous support.

\section{Main Result}

We will prove the following theorem, which is equivalent to Theorems \ref{t:main1} and \ref{t:main2}.

\begin{theorem}\label{t:main-reduction}
    Let $E\subset \R\times \{0\}\subset \R^2$ be finite with $N=\# E \geq 2$. Let $X(E)$ denote the vector space of real-valued functions on $E$. Then there exists a linear map ${T^\sharp}: X(E) \to \bigcap_{p\in(1,2)} L^{2,p}(\R^2)$, and there exist linear functionals $\ell_\nu : X(E) \to \R$ and positive coefficients $\lambda_\nu$ for $\nu=1,\dots,\nu_{\max}$, with the following properties:
    \begin{itemize}
        \item[(a)] For every $f\in X(E)$, ${T^{\sharp}}f=f$ on $E$;
        \item[(b)] For every $f\in X(E)$ and $p\in (1,2)$,
        \[
            \Vert {T^{\sharp}}f\Vert_{L^{2,p}(\R^2)}^p \leq C_p \sum_{\nu=1}^{\nu_{\max}} \lambda_\nu |\ell_\nu(f)|^p,
        \]
        {where the constant $C_p$ depends only on $p$;}
        \item[(c)] For $f\in X(E)$, $p\in (1,2)$ and $F\in L^{2,p}(\R^2)$ with $F=f$ on $E$, we have
        \[
            \Vert F \Vert_{L^{2,p}(\R^2)}^p \geq c_p \sum_{\nu=1}^{\nu_{\max}} \lambda_\nu |\ell_\nu(f)|^p,
        \]
        {where the constant $c_p$ depends only on $p$;}
        \item[(d)] For every point $z\in \R^2$, there exists a subset $S(z)\subset E$ with $\# S(z) \leq C$, such that ${T^\sharp} f(z)$ depends only on $f|_{S(z)}$;
        \item[(e)] For each $\nu \in \{1,\dots,\nu_{\max}\}$, there exists a subset $S_\nu\subset E$ with $\# S_\nu\leq C$, such that $\ell_\nu(f)$ depends only on $f|_{S_\nu}$;
        \item[(f)] $\nu_{\max}\leq CN$.
    \end{itemize}
    {The constants $C$ in (d), (e), and (f), are universal, so independent of the set $E$ and the function $f$.}
\end{theorem}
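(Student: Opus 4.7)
The plan is to prove Theorem \ref{t:main-reduction} by constructing a Calder\'on-Zygmund decomposition of the plane adapted to the augmented set $E^+ = E \cup \{(-1,0)\}$, assigning to each CZ square a local affine (or low-degree polynomial) approximation $P_Q$ built from values of $f$ at nearby points of $E^+$, and then gluing these $P_Q$'s via a Whitney partition of unity $\{\theta_Q\}$ to obtain
\[
T^\sharp f \;=\; \sum_Q \theta_Q\, P_Q.
\]
The linear functionals $\ell_\nu$ will be first-order difference quotients $\DQ(z_i,z_j)$ (for the carefully chosen pairs arising from the CZ geometry) and second-order combinations thereof, with weights $\lambda_\nu$ determined by the geometry: roughly $\lambda_\nu \sim |x_i - x_j|^{2-p}$ for a second-order quotient involving points $z_i, z_j$ (this exponent is dictated by the Sobolev trace scaling $L^{2,p}(\R^2) \hookrightarrow B^{2-1/p}_{p,p}(\R)$).

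\textbf{Steps.} First I would perform the reductions indicated in the introduction: via translation and dilation assume $E \subset [-2^{-11}, 2^{-11}]\times\{0\}$, and enlarge to $E^+$ so that every CZ square $Q$ has a reference point to its left. Next, using a standard stopping-time argument, I would construct CZ squares $Q$ whose side lengths $\delta_Q$ are comparable to $\mathrm{dist}(Q, E^+)$ and such that a bounded-diameter dilate $C^*Q$ contains a bounded number of $E^+$-points. For each $Q$ I would then distinguish two cases as in Figure \ref{fig:CZ}: in the \emph{easy case} there exist two points $z_1,z_2 \in E$ near $Q$ with $|z_1-z_2|\simeq\delta_Q$, and I set the slope $\partial_x P_Q$ equal to $\DQ(z_1,z_2)$; in the \emph{hard case} all nearby $E$-points are clustered in a subinterval of length $\ll \delta_Q$, and I pick a reference $z^*\in E^+$ (with $Q\in\GP(z^*)$) and set $\partial_x P_Q := \DQ(z_1,z^*)$ for $z_1$ the cluster. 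In both cases the constant term of $P_Q$ is pinned by requiring $P_Q(z_1) = f(z_1)$. A standard consistency check then gives $T^\sharp f = f$ on $E$.

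\textbf{Upper bound.} To estimate $\|\nabla^2 T^\sharp f\|_{L^p(Q)}$, I would use the identity $\nabla^2 T^\sharp f = \sum_{Q'} (\nabla^2\theta_{Q'})(P_{Q'}-P_Q)$ on $Q$ and derive
\[
\|\nabla^2 T^\sharp f\|_{L^p(Q)}^p \;\lesssim\; \sum_{Q'\sim Q} \delta_Q^{-p}\bigl(|\partial_x P_{Q'}-\partial_x P_Q|^p\,\delta_Q^2 + \delta_Q^{-p}|P_{Q'}(z_Q)-P_Q(z_Q)|^p\,\delta_Q^2\bigr).
\]
For two easy-case neighbors, these jumps are controlled by a second-order difference quotient $\ell_\nu(f) = \DQ(z_1,z_2) - \DQ(z_2,z_3)$ weighted by $\lambda_\nu \sim \delta_Q^{2-p}$. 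When summing contributions from hard-case squares, I would group squares by the reference point $z^*$ and use the geometric lemma of Section \ref{sec:gps} (the ``staircase'' structure of $\GP(z^*)$ depicted in Figure \ref{fig:Gp}): the resulting sum over $Q\in\GP(z^*)$ telescopes against a single functional $\ell_\nu(f) = \DQ(z_1,z^*)-\DQ(z_1',z^*)$ with an appropriate weight, giving the required bounded-overlap control.

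\textbf{Lower bound, sparsity, and cleanup.} For the lower bound, given any competitor $F\in L^{2,p}(\R^2)$ with $F|_E=f$, I would show via Sobolev--Poincar\'e on rectangles of aspect ratio $\simeq 1$ covering pairs $(z_i,z_j)$ that
\[
\bigl|\DQ(z_i,z_j) - \langle \partial_x F\rangle_R\bigr|^p \;\lesssim\; |x_i-x_j|^{p-2}\,\|\nabla^2 F\|_{L^p(R)}^p,
\]
for a suitable rectangle $R$, and chaining these inequalities across the Gp staircase yields the matching lower bound $\|F\|_{L^{2,p}}^p \gtrsim \sum_\nu \lambda_\nu|\ell_\nu(f)|^p$ with bounded overlap of the rectangles $R$. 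Sparsity (d)--(f) follows automatically from the construction: only boundedly many CZ squares meet any point $z$, each $P_Q$ depends on boundedly many values of $f$, and the total number of CZ squares is $O(N)$ by standard CZ counting. Finally, to remove the auxiliary point $(-1,0)$, in Section \ref{sec:rempt} I would modify $T^\sharp$ by subtracting a global affine correction chosen so that (i) the extension remains valid without reference to $f(-1,0)$, and (ii) the affine correction has negligible seminorm contribution. The \textbf{main obstacle} I expect is the hard-case summation: proving the Gp staircase lemma and showing that the telescoping over $\GP(z^*)$ produces only $O(1)$ overlapping functionals per point $z^*$, so that the total contribution from the hard case matches the lower-bound functionals with $p$-summability preserved.
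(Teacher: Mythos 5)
Your proposal follows essentially the same approach as the paper: augment to $E^+ = E\cup\{(-1,0)\}$, build a Calder\'{o}n--Zygmund decomposition adapted to $E$, distinguish easy and hard squares, group the hard squares by a reference point $z^*$ into a staircase $\GP(z^*)$, assign to each CZ square a local affine polynomial built from difference quotients, glue with a Whitney partition of unity, prove the upper bound by controlling pairwise jumps of the $P_Q$'s, prove the lower bound via Sobolev--Poincar\'{e} plus telescoping along the staircase, and finally strip off the auxiliary point. You have correctly identified the key new ingredient (the $\GP(z^*)$ staircase and the associated telescoping lemma) and the main obstacle (proving the staircase structure and controlling overlap in the hard-case summation), which are precisely what the paper's Lemmas~\ref{lem:9.1}--\ref{lem:9.4} and Lemma~\ref{lem:s1} handle.

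Two details deserve a flag. First, when you write that the constant term of $P_Q$ is pinned by $P_Q(z_1)=f(z_1)$, that choice does not automatically yield $T^\sharp f=f$ on $E$: the paper pins $L_Q$ at a specially chosen point $\underline{z}(Q)$, which is forced to be \emph{the} unique point of $1.1Q\cap E$ whenever that intersection is nonempty, so that $L_Q$ agrees with $f$ on $\supp\theta_Q\cap E$ for every $Q$ touching a data point; pinning at an arbitrary nearby $z_1$ would break the interpolation identity. Your ``standard consistency check'' would catch this, but it is not automatic. Second, your heuristic $\lambda_\nu\sim |x_i-x_j|^{2-p}$ has the right scaling but is stated in a way that makes $\lambda_\nu$ depend on $p$; to obtain the $p$-independence claimed in Theorem~\ref{t:main2} one must (as the paper does via Lemma~\ref{lem:t10}) put the inverse-length factors inside $\ell_\nu$ and keep $\lambda_\nu$ a pure area $\delta_Q^2$, so that the $p$-dependence arises only from raising $\ell_\nu(f)$ to the $p$th power. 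Neither point is a gap in the strategy, only in the bookkeeping.
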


\section{Constants} \label{sec:constants}
% \note{I think this section should go before we introduce any constants, so somewhere in the intro, or at least just before the above theorem; M: I think we should leave this here, as it's about the contents of our proof but add a remark on the constants to the above theorem. I've added such a remark.} {\color{purple}A: this works, thanks!}
Let $1<p<2$. Fix $\tilde p= \frac{1+p}{2}\in(1,p)$. We write $c,C,C'$ to denote absolute constants and we write $c_p,C_p,C'_p$ to denote constants depending only on $p$. We may use the same symbol to denote different constants in different occurrences.

Let $K>0$ be larger than some absolute constant. We write
\[ 
    c(K), \ C(K), \ C'(K),
\]
to denote constants depending only on $K$. Once again, the same symbol could be used to denote different occurrences.

At the end of Section \ref{sec:gps}, we choose $K$ to be an absolute constant, taken sufficiently large. Once we do so, $K$ and any instances of $C(K)$ become absolute constants $C$.

\section{Dyadic Squares and Intervals}\label{s:squares}
Here, \underline{dyadic squares} are $Q^0=[-1,1]^2\subset \R^2$ and all {closed} squares arising from $Q^0$ by successive bisection. More precisely, any dyadic square $Q$ is either $Q^0$ or of the form $[i 2^{-k},(i+1) 2^{-k}]\times [j 2^{-k}, (j+1) 2^{-k}]$, for some $k \in \N\cup\{0\}$ and $i,j\in \{-2^k,\dots,2^k -1\}$. Similarly, \underline{dyadic intervals} consist of the interval $I^0=[-1,1]\subset \R$ and all the {closed} intervals {$[i 2^{-k}, (i+1)2^{-k}]$ for $i\in \{-2^k,\dots, 2^k-1\}$ and $k\in \N$}, obtained by successive bisection of $I^0$. Note that dyadic squares and intervals are \emph{closed}.

If $Q$ is any dyadic square, we denote its side length by $\delta_Q$ and if $I$ is a dyadic interval, we denote its length by $|I|$. Moreover, we denote the area $\delta_Q^2$ of a dyadic square $Q$ by $|Q|$. Each dyadic square $Q$ of side length $\delta_Q\leq 1$ arises by bisecting a dyadic square $Q^+ \supset Q$ of side length $2\delta_Q$; we call $Q^+$ the \underline{dyadic parent} of $Q$. Similarly, each dyadic interval $I$ of length $|I|\leq 1$ arises by bisecting a dyadic interval $I^+\supset I$ of length $2|I|$; we call $I^+$ the \underline{dyadic parent} of $I$.

If $I$ is a dyadic interval and $C>1$, then $CI$ denotes the {\emph{open}} interval of length $C|I|$ whose center coincides with that of $I$. Similarly, if $Q$ is a dyadic square and $C>1/4$ ($C\neq 1$), then $CQ$ denotes the {\emph{open}} square of side length $C\delta_Q$ whose center coincides with that of $Q$.

Throughout this article, we will make frequent use of the square $Q_{\inner} := [-2^{-10},2^{-10}]^2\subset Q^0$.

\section{Some Elementary Inequalities}\label{sec:elementary}
Suppose $F \in L^{2,p}(\R^2)$. In this section, we let $Q$ denote a \emph{closed} square in $\R^2$. We will henceforth simply refer to such $Q$ as a square. 
% \note{A: Charlie said to add a sentence about dyadic squares and intervals being closed in this section but everything in this section is stated for a general square so maybe something like this is better? And we can instead recall that dyadic squares/intervals are closed in the next section?} {\color{blue} M: Did the section numbers change? He says to add a remark in *Section 5* above which makes more sense. FWIW, writing $I \times J$ and then having $|I|=|J|$ seems distracting - in particular, in the Sobolev inequalities, the constant $C_p$ depends on the geometry on the domain - so if the aspect ratio ($|I|/|J|$) changed, then the constant would change. But this isn't the direction we're going with out work, so just making it a remark about cubes without labeling their sides is cleaner IMO. I recommend just writing: In this section, we let $Q$ denote a \emph{closed} square in $\R^2$. }
For any such $Q \subset \R^2$, we have the \\
\underline{Sobolev Inequality} {\cite[Lemma 7.16]{gt}}: $F$ may be expressed in the form $F_1 + L$ on $Q$, where $L$ is a first degree polynomial and $F_1$ is continuous on $Q$; moreover,
\[
\delta_Q^{2-2p} \max_{z \in Q}\{ |F_1(z)| \}^p + \delta_Q^{-p} \int_{z \in Q} |\nabla F_1(z)|^p dz \leq C_p  \left(\frac{1}{|Q|} \int_{z \in Q} |\nabla^2 F |^{\tilde{p}} dz \right)^{p/\tilde{p}}\delta_Q^2.
\]
For instance, given any point $z_0\in Q$, one may take $L(z) = F(z_0) + \frac{1}{|Q|}\int_{w \in Q} \nabla F(w)dw \cdot (z-z_0)$. This inequality is not sharp, but it's enough for our purposes.

We introduce the (non-centered) \emph{maximal function}
\[
\Mcal \varphi(z) = \sup_{Q \ni {z}} \left( \frac{1}{|Q|} \int_{w \in Q} |\varphi(w)|^{\tilde{p}} dw \right)^{1/\tilde{p}}
\]
for functions $\varphi: \R^2 \to \R$. Here the supremum is taken over all squares $Q$ containing $z$.

We recall the \\
\underline{Hardy-Littlewood Maximal Theorem} \cite[Chapter I, \S 3, Theorem 1]{Stein-HA}:
\[
\int_{\R^2} (\Mcal \varphi)^p dz \leq C_p \int_{\R^2} |\varphi|^p dz, \qquad {1<p\leq \infty, \ \varphi\in L^p(\R^2)}
\]
For $F \in L^{2,p}(\R^2)$ and a square $Q\subset \R^2$, we write $\Av_Q \nabla F$ to denote the mean $\frac{1}{|Q|} \int_Q \nabla F(z) dz$. Similarly, we write $\Av_Q \p_x F$ and $\Av_Q \p_y F$ to denote the components of $\Av_Q \nabla F$.

Applying the Sobolev Inequality to the open square $CQ$ ($C>1.1$) defined as in Section \ref{s:squares}, we obtain the following corollaries:

\begin{cor}\label{cor:t1}
    Let $\uz \in CQ$, and let $L_Q(z) = F(\uz) +(\Av_Q \nabla F) \cdot (z - \uz)$. Then for $|\al| \leq 1$, we have
    \begin{align*}
    \delta_Q^{|\al|p - 2p} \int_{1.1Q} |\p^{\al} (F-L_Q)|^p dz &\leq C_p \delta_Q^2 \left( \frac{1}{|CQ|} \int_{CQ} |\nabla^2 F|^{\tilde{p}} dz\right)^{p/\tilde{p}} \\
    &\leq C_p' \int_Q \left( \Mcal(|\nabla^2 F|) \right)^{p} dz.
    \end{align*}
\end{cor}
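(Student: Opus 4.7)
The plan is to decompose $F$ on $CQ$ via the Sobolev Inequality stated earlier, then check that the affine polynomial $L_Q$ differs from the Sobolev-produced first-degree polynomial $L$ only by the obvious constant/average corrections, so that $F-L_Q$ is controlled by $F_1$.

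First I would apply the Sobolev Inequality to $CQ$ to write $F = F_1 + L$ on $CQ$, with $L$ a first-degree polynomial and
\[
\delta_Q^{2-2p}\max_{CQ}|F_1|^p + \delta_Q^{-p}\int_{CQ}|\nabla F_1|^p\,dz \le C_p\,\delta_Q^2\left(\frac{1}{|CQ|}\int_{CQ}|\nabla^2 F|^{\tilde p}\,dz\right)^{p/\tilde p}.
\]
Since $\nabla L$ is a constant vector $v$, we get $\Av_Q\nabla F = v + \Av_Q\nabla F_1$, and writing $L(z)-L(\uz)=v\cdot(z-\uz)$, the affine parts cancel to give
\[
F(z)-L_Q(z) = \bigl(F_1(z)-F_1(\uz)\bigr) - (\Av_Q\nabla F_1)\cdot(z-\uz) \quad \text{on } CQ.
\]
This is the key algebraic identity; after it, only routine estimates remain.

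Next I would handle the two cases $|\alpha|=0,1$ separately. For $|\alpha|=1$, $\nabla(F-L_Q) = \nabla F_1 - \Av_Q\nabla F_1$, so Hölder gives $|\Av_Q\nabla F_1|^p \le \delta_Q^{-2}\|\nabla F_1\|_{L^p(Q)}^p$, and therefore
\[
\int_{1.1Q}|\nabla(F-L_Q)|^p\,dz \le C\int_{CQ}|\nabla F_1|^p\,dz.
\]
Multiplying by $\delta_Q^{-p}$ and invoking the Sobolev bound above finishes this case. For $|\alpha|=0$, on $1.1Q$ we have $|z-\uz|\le C\delta_Q$, so
\[
|F-L_Q|^p \le C\max_{CQ}|F_1|^p + C\delta_Q^p|\Av_Q\nabla F_1|^p;
\]
integrating over $1.1Q$ produces a factor of $\delta_Q^2$, and each term is controlled by the Sobolev right-hand side (the first directly from the $\max|F_1|$ term after scaling by $\delta_Q^{2-2p}$, the second from the $\|\nabla F_1\|_{L^p}$ term after using the Hölder bound on $|\Av_Q\nabla F_1|^p$). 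Multiplying through by $\delta_Q^{-2p}$ gives the first inequality in the statement.

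Finally, for the second inequality, I would observe that since $Q \subset CQ$ and $CQ$ is a square containing every point of $Q$, the definition of the non-centered maximal function yields
\[
\left(\frac{1}{|CQ|}\int_{CQ}|\nabla^2 F|^{\tilde p}\,dz\right)^{1/\tilde p} \le \Mcal(|\nabla^2 F|)(z) \quad \text{for every } z\in Q.
\]
Raising to the $p$-th power and integrating this pointwise bound over $z\in Q$ gives exactly
\[
\delta_Q^2\left(\frac{1}{|CQ|}\int_{CQ}|\nabla^2 F|^{\tilde p}\right)^{p/\tilde p} \le \int_Q (\Mcal(|\nabla^2 F|))^p\,dz,
\]
which chains onto the first inequality. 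I do not anticipate a genuine obstacle here; the only thing requiring care is making sure the affine correction in $L_Q$ is aligned with the one implicit in the Sobolev inequality so that $F-L_Q$ can be replaced cleanly by a controllable function of $F_1$ alone.
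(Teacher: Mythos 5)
Your proposal is correct and follows essentially the route the paper intends: apply the Sobolev Inequality on $CQ$, observe that the affine part of the decomposition cancels against $L_Q$ so that $F-L_Q$ reduces to a combination of $F_1$, $F_1(\uz)$, and $\Av_Q\nabla F_1$, and then use Jensen/H\"older to control the averaged term before finishing with the pointwise maximal-function bound over $Q$. The paper states this corollary without an explicit proof (only the remark that the Sobolev Inequality is applied to $CQ$), and your argument is a faithful filling-in of that outline.
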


\begin{remark}\label{rem:t1}
    In fact, if $Q, Q'$ are squares such that $\delta_Q = \delta_{Q'}$ and $100Q \cap 100Q' \neq \emptyset$, then we have $C_p \delta_Q^2 \left( \frac{1}{|CQ|} \int_{CQ} |\nabla^2 F|^{\tilde{p}} dz\right)^{p/\tilde{p}} \leq C_p' \int_{Q'} \left( \Mcal(|\nabla^2 F|) \right)^{p} dz$.
\end{remark}

\begin{cor}\label{cor:t2}
    Let $z_1 = (x_1,0)$ and $z_2 = (x_2, 0)$ belong to $CQ$, with $|x_1-x_2|\geq c \delta_Q$. Then
    \begin{align*}
        \delta_Q^{2-p} \left| \Av_Q \p_x F - \frac{F(z_2) - F(z_1)}{x_2-x_1} \right|^p &\leq C_p \delta_Q^2 \left( \frac{1}{|CQ|} \int_{CQ} |\nabla^2 F|^{\pt} dz \right)^{p/\pt} \\
        & \leq C_p' \int_{Q} \left( \Mcal(|\nabla^2 F|) \right)^p dz.
    \end{align*}
\end{cor}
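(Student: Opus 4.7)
\textbf{Proof plan for Corollary \ref{cor:t2}.}

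The plan is to reduce to Corollary \ref{cor:t1} by subtracting off the linear part of $F$ on $CQ$. Apply the Sobolev Inequality to the square $CQ$ (with $C>1.1$ absolute) to write $F = F_1 + L$ on $CQ$, where $L$ is a first-degree polynomial and $F_1$ is continuous, with
\[
    \delta_Q^{2-2p} \max_{CQ}|F_1|^p + \delta_Q^{-p}\int_{CQ}|\nabla F_1|^p\,dz \;\leq\; C_p\,\delta_Q^2 \left(\frac{1}{|CQ|}\int_{CQ}|\nabla^2 F|^{\tilde p}\,dz\right)^{p/\tilde p}.
\]
The crucial observation is that a degree-one polynomial has trivial second differences: its $x$-derivative is constant, so $\partial_x L \equiv \Av_Q \partial_x L$, and since $z_1, z_2$ share the same $y$-coordinate, $\frac{L(z_2)-L(z_1)}{x_2-x_1} = \partial_x L$. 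Subtracting, the linear part drops out:
\[
    \Av_Q \partial_x F - \frac{F(z_2)-F(z_1)}{x_2-x_1} \;=\; \Av_Q \partial_x F_1 - \frac{F_1(z_2)-F_1(z_1)}{x_2-x_1}.
\]

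Next I would estimate the two remaining terms in terms of the Sobolev-inequality quantities. For the first, Hölder's inequality gives
\[
    |\Av_Q \partial_x F_1|^p \;\leq\; \frac{1}{|Q|}\int_Q |\nabla F_1|^p\,dz \;\leq\; C\,\delta_Q^{-2}\int_{CQ}|\nabla F_1|^p\,dz.
\]
For the difference quotient, the hypothesis $|x_1-x_2|\geq c\delta_Q$ combined with the continuity of $F_1$ on $CQ$ yields
\[
    \left|\frac{F_1(z_2)-F_1(z_1)}{x_2-x_1}\right|^p \;\leq\; C\,\delta_Q^{-p}\max_{CQ}|F_1|^p.
\]
Combining these two bounds with the Sobolev Inequality controls $|\Av_Q\partial_x F - \DQ(z_1,z_2)|^p$ by $C_p \delta_Q^p \left(\frac{1}{|CQ|}\int_{CQ}|\nabla^2 F|^{\tilde p}\right)^{p/\tilde p}$; multiplying through by $\delta_Q^{2-p}$ gives the first inequality.

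The second inequality is immediate from the definition of the maximal function: for every $z\in Q$, the square $CQ$ contains $z$, so $\left(\frac{1}{|CQ|}\int_{CQ}|\nabla^2 F|^{\tilde p}\right)^{1/\tilde p} \leq \Mcal(|\nabla^2 F|)(z)$, and integrating the $p$-th power over $Q$ (which has measure $\delta_Q^2$) absorbs the factor $\delta_Q^2$ on the left and produces the desired integral on the right. I expect no real obstacle here; the only subtle point is remembering to use that $z_1,z_2$ lie on the $x$-axis so that the difference quotient of $L$ reduces exactly to $\partial_x L$, which is what allows the linear part to cancel cleanly.
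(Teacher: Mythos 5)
Your proof is correct and takes essentially the same route the paper intends when it states that Corollaries \ref{cor:t1}--\ref{cor:t4} follow from ``applying the Sobolev Inequality to the open square $CQ$'': decompose $F = F_1 + L$ on $CQ$, note that both $\Av_Q\partial_x L$ and the difference quotient of a first-degree $L$ along a horizontal segment equal the constant $\partial_x L$ (so the linear part cancels identically), and then bound the two surviving terms using precisely the two quantities ($\delta_Q^{-p}\int_{CQ}|\nabla F_1|^p$ and $\delta_Q^{2-2p}\max_{CQ}|F_1|^p$) that the Sobolev Inequality controls, finishing with the pointwise bound by the maximal function integrated over $Q$. No gaps.
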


\begin{cor}\label{cor:t3}
    Let $z_1 = (x_1, y_1)$ and $z_2 = (x_1,y_2)$ belong to $CQ$, with $|y_1-y_2|\geq c \delta_Q$. Then
    \begin{align*}
        \delta_Q^{2-{p}}\left|\Av_Q {\p_y} F - \frac{F(z_2) - F(z_1)}{y_2 - y_1} \right|^p &\leq C_p \delta_Q^2 \left( \frac{1}{|CQ|} \int_{CQ} |\nabla^2 F|^{\pt} dz \right)^{p/\pt} \\
        & \leq C_p' \int_Q \left( \Mcal(|\nabla^2 F|) \right)^p dz.
    \end{align*}
\end{cor}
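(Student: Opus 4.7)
My plan is to prove Corollary \ref{cor:t3} by the same template as Corollary \ref{cor:t2}, exchanging the roles of the $x$ and $y$ coordinates. The geometric input changes (two points with the same $x$-coordinate and separated vertically rather than the reverse), but the structural argument is identical: use Corollary \ref{cor:t1} to produce a local affine approximant $L_Q$ that matches $F$ at $z_1$, then observe that the difference quotient of $L_Q$ across $z_1, z_2$ equals $\Av_Q \p_y F$ because $z_2 - z_1$ is purely vertical.

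Concretely, I would apply Corollary \ref{cor:t1} with $\uz = z_1 \in CQ$, giving $L_Q(z) = F(z_1) + (\Av_Q \nabla F)\cdot (z - z_1)$. Since $z_2 - z_1 = (0, y_2 - y_1)$, a direct calculation yields
\[
    L_Q(z_2) - L_Q(z_1) = (\Av_Q \p_y F)(y_2 - y_1),
\]
so
\[
    \Av_Q \p_y F - \frac{F(z_2) - F(z_1)}{y_2 - y_1} = -\frac{(F-L_Q)(z_2) - (F-L_Q)(z_1)}{y_2 - y_1}.
\]
Now $(F - L_Q)(z_1) = 0$ by construction, and both $z_1, z_2 \in CQ$, so the Sobolev Inequality applied on $CQ$ (taking the maximum of $|F - L_Q|$ over $CQ$, exactly as in the displayed inequality of Section \ref{sec:elementary}) gives
\[
    \max_{z\in CQ} |F(z) - L_Q(z)|^p \leq C_p \delta_Q^{2p}\left( \frac{1}{|CQ|} \int_{CQ} |\nabla^2 F|^{\tilde p} \right)^{p/\tilde p}.
\]
Combining this with the lower bound $|y_2 - y_1| \geq c \delta_Q$ and multiplying through by $\delta_Q^{2-p}$ produces the first displayed inequality of Corollary \ref{cor:t3}, with a constant absorbed into $C_p$.

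For the second inequality, I would use the standard fact that for any $z \in Q$, since $z \in CQ$ and $CQ$ is a square containing $z$, the non-centered maximal function satisfies
\[
    \Mcal(|\nabla^2 F|)(z) \geq \left( \frac{1}{|CQ|} \int_{CQ} |\nabla^2 F|^{\tilde p} \right)^{1/\tilde p}.
\]
Raising to the $p$-th power and integrating over $Q$ yields
\[
    \delta_Q^2 \left( \frac{1}{|CQ|} \int_{CQ} |\nabla^2 F|^{\tilde p} \right)^{p/\tilde p} \leq \int_Q \left(\Mcal(|\nabla^2 F|)\right)^p \, dz,
\]
which combined with the previous step completes the proof. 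There is no genuine obstacle here; the only thing to be careful about is verifying that the Sobolev decomposition of $F$ on $CQ$ really allows one to control $|F - L_Q|$ pointwise in the maximum norm by the Sobolev-inequality bound, which is exactly what Corollary \ref{cor:t1} and its proof already justify.
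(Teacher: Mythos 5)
Your proposal is correct and matches the paper's approach: the paper presents Corollaries~\ref{cor:t1}--\ref{cor:t4} as direct consequences of the Sobolev Inequality applied to $CQ$, and your argument — decompose $F = F_1 + L_Q$ on $CQ$, observe that $L_Q$ has the right vertical difference quotient because $z_2 - z_1$ is purely vertical, then use the sup bound on $F_1$, the lower bound $|y_2-y_1|\geq c\delta_Q$, and the pointwise lower bound on $\Mcal(|\nabla^2 F|)$ over $Q$ — is exactly that computation with the $x$- and $y$-roles swapped relative to Corollary~\ref{cor:t2}. (The one small point you leave implicit, as does the paper in Corollary~\ref{cor:t1}, is that $L_Q$ uses $\Av_Q\nabla F$ while the Sobolev decomposition on $CQ$ naturally produces $\Av_{CQ}\nabla F$; the discrepancy is an affine polynomial whose gradient is $\Av_Q\nabla F_1$, controlled by the same right-hand side, so this is harmless.)
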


\begin{cor}\label{cor:t4}
    Let $Q_1, Q_2, Q$ be squares, and suppose $Q_1,Q_2 \subset CQ$ and $\delta_{Q_i} \geq c\delta_Q$ for $i=1,2$. Then
    \begin{align*}
        \delta_Q^{2-p}\left|\Av_{Q_1} \nabla F - \Av_{Q_2} \nabla F \right|^p &\leq C_p \delta_Q^2 \left( \frac{1}{|CQ|} \int_{CQ} |\nabla^2 F|^{\pt} dz \right)^{p/\pt} \\
        & \leq C_p' \int_Q \left( \Mcal(|\nabla^2 F|) \right)^p dz.
    \end{align*}
\end{cor}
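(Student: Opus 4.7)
The plan is to reduce to the Sobolev Inequality applied to $CQ$ and exploit the fact that the affine part contributes nothing to the difference of averages. Write $F = F_1 + L$ on $CQ$ as provided by the Sobolev Inequality, where $L$ is a first degree polynomial. Since $\nabla L$ is a constant vector and $Q_1, Q_2 \subset CQ$, we have $\Av_{Q_1} \nabla L = \Av_{Q_2} \nabla L = \nabla L$, so these terms cancel. Therefore
\[
    \Av_{Q_1} \nabla F - \Av_{Q_2} \nabla F = \Av_{Q_1} \nabla F_1 - \Av_{Q_2} \nabla F_1.
\]

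Next I would estimate each $\Av_{Q_i} \nabla F_1$ individually using Jensen's inequality and the size assumption on $Q_i$. Since $\delta_{Q_i} \ge c \delta_Q$ and $Q_i \subset CQ$, Jensen gives
\[
    |\Av_{Q_i} \nabla F_1|^p \le \frac{1}{|Q_i|} \int_{Q_i} |\nabla F_1|^p \, dz \le C \delta_Q^{-2} \int_{CQ} |\nabla F_1|^p \, dz.
\]
Plugging in the bound $\delta_{CQ}^{-p} \int_{CQ} |\nabla F_1|^p \le C_p \left( \frac{1}{|CQ|} \int_{CQ} |\nabla^2 F|^{\tilde p} \right)^{p/\tilde p} \delta_{CQ}^2$ from the Sobolev Inequality and using $\delta_{CQ} \simeq \delta_Q$, this gives $|\Av_{Q_i} \nabla F_1|^p \le C_p \delta_Q^p \left( \frac{1}{|CQ|} \int_{CQ} |\nabla^2 F|^{\tilde p} \right)^{p/\tilde p}$. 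Applying the triangle inequality to the two terms and multiplying by $\delta_Q^{2-p}$ yields the first inequality.

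For the second inequality, I would note that for every $z \in Q$, the square $CQ$ contains $z$, so by definition of the maximal function,
\[
    \left( \frac{1}{|CQ|} \int_{CQ} |\nabla^2 F|^{\tilde p} \, dw \right)^{1/\tilde p} \le \Mcal(|\nabla^2 F|)(z).
\]
Raising to the $p$-th power and integrating over $z \in Q$ (and using $|Q| = \delta_Q^2$) converts the average into the desired integral of the maximal function over $Q$, completing the chain.

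There is no real obstacle here; the statement is a direct consequence of the Sobolev Inequality and its use in Corollary \ref{cor:t1}, and the proof parallels the proofs of Corollaries \ref{cor:t1}--\ref{cor:t3}. The only mild care needed is to verify that the linear part $L$ in the Sobolev decomposition indeed cancels (which it does because $\nabla L$ is constant) and that the assumption $\delta_{Q_i} \ge c \delta_Q$ is what allows one to absorb the factor $|Q_i|^{-1}$ into a constant times $\delta_Q^{-2}$.
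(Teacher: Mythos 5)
Your proof is correct and takes essentially the approach the paper intends: apply the Sobolev Inequality to $CQ$, observe that the linear part $L$ has constant gradient so it cancels in the difference of averages, estimate each $\Av_{Q_i}\nabla F_1$ via Jensen and the lower bound $\delta_{Q_i}\geq c\delta_Q$, and pass to the maximal function by noting $CQ$ is a square containing every $z\in Q$. The paper states Corollaries \ref{cor:t1}--\ref{cor:t4} without explicit proof, saying only that they follow by applying the Sobolev Inequality to $CQ$, and your argument supplies exactly those intended details.
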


We use also the following estimates:

\begin{lem}\label{lem:t10}
    Let $Q,Q'$ be squares. Suppose $Q\cap Q'\neq \emptyset$, and suppose that their side lengths differ by at most a factor of two. Let $\uz \in CQ$. Let $P$ be a first degree polynomial on $\R^2$. Then the quantities $\sum_{|\al| \leq 1} \delta_Q^{|\al|p - 2p} \int_{1.1Q \cap 1.1 Q'} |\p^\al P |^p dz$ and $\delta_Q^{2} \left(|\delta_Q^{-2}P(\uz)|^p + |\delta_Q^{-1} \p_x P|^p + |\delta_Q^{-1} \p_y P|^p \right)$ differ by at most a factor $C_p$, where $C_p$ is determined by $C$ and $p$. 
\end{lem}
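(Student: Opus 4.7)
The plan is to parametrize $P$ as $P(z) = a + b \cdot (z - \uz)$ with $a := P(\uz)$ and $b := \nabla P$ (a constant vector, since $P$ has degree one), and then reduce both sides of the claimed equivalence to simple algebraic expressions in $a$ and $b$. The essential geometric ingredient is the two-sided bound
\[
    c\delta_Q^2 \leq |1.1Q \cap 1.1Q'| \leq C\delta_Q^2.
\]
The upper bound is trivial. The lower bound holds because $Q \cap Q' \neq \emptyset$ together with $\delta_{Q'}$ differing from $\delta_Q$ by at most a factor of two forces the $1.1$-dilations to overlap in an open set of side length at least $c\delta_Q$, even in the worst case where $Q$ and $Q'$ meet at only a single corner.

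Given this estimate, the $|\al|=1$ contributions are immediate: since $\p_x P \equiv b_1$ and $\p_y P \equiv b_2$ are constants, the corresponding integrals equal $|b_j|^p \cdot |1.1Q \cap 1.1Q'|$, and multiplication by $\delta_Q^{-p}$ together with the area estimate yields quantities comparable to $\delta_Q^{2-p}|b_j|^p$, matching the last two target terms up to a factor $C_p$. For the $|\al|=0$ contribution, one direction is the upper bound
\[
    \delta_Q^{-2p} \int_{1.1Q \cap 1.1Q'} |P|^p\,dz \leq C_p\bigl(\delta_Q^{2-2p}|a|^p + \delta_Q^{2-p}|b|^p\bigr),
\]
obtained from $|P(z)| \leq |a| + |b|\cdot|z - \uz|$ combined with the elementary observation $|z - \uz| \leq C\delta_Q$ (valid since both $\uz$ and $z$ lie in $CQ$).

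For the reverse direction, I would pick any $z_0 \in 1.1Q \cap 1.1Q'$, use the triangle inequality $|P(\uz)|^p \leq C_p(|P(z_0)|^p + \delta_Q^p|b|^p)$, and then average over $z_0 \in 1.1Q \cap 1.1Q'$ using $|1.1Q \cap 1.1Q'| \geq c\delta_Q^2$ to conclude
\[
    \delta_Q^{2-2p}|a|^p \leq C_p\left(\delta_Q^{-2p}\int_{1.1Q \cap 1.1Q'}|P|^p\,dz + \delta_Q^{2-p}|b|^p\right),
\]
where the trailing $|b|^p$ term is already dominated by the $|\al|=1$ contributions already handled. Combining these inequalities yields the claimed two-sided equivalence with constants depending only on $C$ and $p$. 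There is no serious obstacle in the argument; the only step requiring modest care is the lower bound $|1.1Q \cap 1.1Q'| \geq c\delta_Q^2$, which is settled by a short case analysis of how two comparable closed squares in the plane can meet.
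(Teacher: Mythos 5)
Your argument is correct and is exactly the elementary argument the paper has in mind (the paper states ``The elementary proof is left to the reader''). The only non-automatic step is the two-sided area bound $c\delta_Q^2 \le |1.1Q\cap 1.1Q'| \le C\delta_Q^2$, and your justification of the lower bound is sound: any point $z_0 \in Q\cap Q'$ lies at distance at least $0.05\,\delta_Q$ from the boundary of $1.1Q$ and at least $0.05\,\delta_{Q'} \ge 0.025\,\delta_Q$ from the boundary of $1.1Q'$, so the intersection contains a disk of radius $0.025\,\delta_Q$ about $z_0$; the rest is the parametrization $P(z) = a + b\cdot(z-\uz)$ with $|z-\uz|\le C'\delta_Q$ on the domain of integration, the triangle inequality, and the $p$-th-power convexity bound $(s+t)^p \le C_p(s^p+t^p)$, exactly as you describe.
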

The elementary proof is left to the reader.

\begin{lem}\label{lem:s1}
    Let $x_0, x_1, ..., {x_{L-1}} \in \R$, and let $1<p<2$. Then
\begin{align}
    \sum_{\ell = 0}^{L-1} \left| \sum_{k = \ell}^{L-1} x_k \right|^p 2^{(2-p)\ell} \leq C_p \sum_{\ell = 0}^{L-1} |x_\ell |^p 2^{(2-p)\ell}.
\end{align}
\end{lem}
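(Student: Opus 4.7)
This is a weighted discrete Hardy-type inequality, and the natural approach is to apply H\"older's inequality after inserting an auxiliary geometric weight. The crucial observation is that since $p<2$, the exponent $(2-p)/p$ is strictly positive, so I can fix a parameter $\beta$ with $0 < \beta < (2-p)/p$ (for instance $\beta = (2-p)/(2p)$). This choice is essential: the positivity $\beta > 0$ will make one geometric series decay, while the strict inequality $\beta p < 2-p$ will make a second geometric series dominated by its last term, and both facts are needed.

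First, by the triangle inequality and H\"older's inequality with exponents $p$ and $p' = p/(p-1)$, I write
\[
    \left|\sum_{k=\ell}^{L-1} x_k\right| \;\leq\; \sum_{k=\ell}^{L-1} \bigl(|x_k|\, 2^{\beta k}\bigr)\cdot 2^{-\beta k} \;\leq\; \left(\sum_{k=\ell}^{L-1} |x_k|^p\, 2^{\beta p k}\right)^{\!1/p} \left(\sum_{k=\ell}^{L-1} 2^{-\beta p' k}\right)^{\!1/p'}.
\]
Since $\beta p' > 0$, summing the geometric tail gives $\bigl(\sum_{k\geq \ell} 2^{-\beta p' k}\bigr)^{1/p'} \leq C_p\, 2^{-\beta \ell}$. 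Raising the preceding display to the $p$-th power and multiplying by $2^{(2-p)\ell}$ yields
\[
    \left|\sum_{k=\ell}^{L-1} x_k\right|^p 2^{(2-p)\ell} \;\leq\; C_p\, 2^{(2-p-\beta p)\ell} \sum_{k=\ell}^{L-1} |x_k|^p\, 2^{\beta p k}.
\]

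To finish, I sum over $\ell$ and exchange the order of summation:
\[
    \sum_{\ell=0}^{L-1}\left|\sum_{k=\ell}^{L-1} x_k\right|^p 2^{(2-p)\ell} \;\leq\; C_p \sum_{k=0}^{L-1} |x_k|^p\, 2^{\beta p k} \sum_{\ell=0}^{k} 2^{(2-p-\beta p)\ell}.
\]
Because $2-p-\beta p > 0$ by the choice of $\beta$, the inner geometric sum is dominated by its last term, $C_p\, 2^{(2-p-\beta p)k}$. Combining the exponents gives $2^{\beta p k}\cdot 2^{(2-p-\beta p)k} = 2^{(2-p)k}$, which produces the desired bound $C_p \sum_k |x_k|^p\, 2^{(2-p)k}$. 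There is no substantive obstacle; the only subtlety is the existence of an admissible $\beta$, which is precisely where the hypothesis $p<2$ enters. The inequality degenerates at the endpoint $p=2$, which is consistent with the fact that the weight $2^{(2-p)\ell}$ becomes the trivial weight there and the tail-summation operator fails to be bounded on unweighted $\ell^p$.
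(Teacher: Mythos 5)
Your proof is correct and is essentially the same as the paper's: both insert an auxiliary geometric weight into the inner sum, apply H\"older with exponents $p, p'$ so that the conjugate factor is a convergent geometric series, then swap the order of summation and use that the resulting geometric sum in $\ell$ is dominated by its largest term. The paper's parameter $\eps \in (0, 2-p)$ and your parameter $\beta$ with $0 < \beta p < 2-p$ are related by $\beta p = 2-p-\eps$, so the two choices are interchangeable.
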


\begin{proof}
    % {\color{blue} M: I've corrected this proof and added detail, so it makes sense. but I'd appreciate if you removed what detail is excessive. }{\color{red}It looks good to me - I would be happy with leaving all the details in, for clarity.} 
    To see this, let $\beta = 2^{2/p-1-\eps/p} \in (1, 2^{2/p-1})$ for $\eps \in (0,2-p)$; note that 
\begin{align*}
    \left| \sum_{k = \ell}^{L-1} x_k \right|^p \leq \left( \sum_{k = \ell}^{L-1} |x_k|^p \beta^{(k- \ell)p} \right) \left( \sum_{k = \ell}^{L-1} \beta^{-(k- \ell)p'} \right)^{p/p'}
\end{align*}
by H\"older's inequality, where $p' = \frac{p}{p-1}$. Consequently,
\begin{align*}
    \sum_{\ell = 0}^{L-1} \left| \sum_{k = \ell}^{L-1} x_k \right|^p 2^{(2-p)\ell} &\leq C_p \sum_{\ell = 0}^{L-1} \left( \sum_{k = \ell}^{L-1} |x_k|^p \beta^{(k- \ell)p} \right) 2^{(2-p)\ell} \\
    &= C_p \sum_{k = 0}^{L-1} |x_k|^p \left( \sum_{\ell = 0}^k \beta^{(k- \ell)p}2^{(2-p)\ell} \right) \\
    &= C_p \sum_{k = 0}^{L-1} |x_k|^p \left( \sum_{\ell = 0}^k 2^{(k- \ell)(2-p-\eps)}2^{(2-p)\ell} \right),
\end{align*}
and thus
\begin{align*}
    \sum_{\ell = 0}^{L-1} \left| \sum_{k = \ell}^{L-1} x_k \right|^p 2^{(2-p)\ell} &\leq C_p \sum_{k = 0}^{L-1} |x_k|^p 2^{(2-p)k} \left( \sum_{\ell = 0}^k 2^{(k- \ell)(-\eps)} \right) \\
    &\leq C_p' \sum_{k = 0}^{L-1} |x_k |^p 2^{(2-p)k}.
\end{align*}
\end{proof}

\begin{lem}\label{lem:s2}
    Let $L$ be a first degree polynomial on $\R^2$, and let $\tilde{L}(x,y) = L(x,0)$. Suppose $Q \subset \R^2$ is a square of the form $I \times J$, with $0 \in CJ$. Then 
\begin{align}
    \int_Q |\tilde{L}(x,y)|^p dx dy \leq C_p \int_Q |L(x,y)|^pdxdy.
\end{align}
\end{lem}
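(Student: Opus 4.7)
Write $L(x,y) = \alpha + \beta x + \gamma y$, so $\tilde{L}(x,y) = L(x,0) = \alpha + \beta x$ is independent of $y$. The natural observation is that $\tilde{L}(x,y)$ equals $L(x,\cdot)$ evaluated at the point $y = 0$, and by hypothesis $0 \in CJ$. So the whole statement reduces, after integrating in $x$ and applying Fubini, to a one-dimensional fact about affine functions on an interval.

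The one-dimensional fact I want is the following: if $f(y) = A + By$ is an affine function on $\R$, and $J \subset \R$ is a closed interval with $0 \in CJ$, then
\[
    |f(0)|^p \leq \frac{C_p}{|J|} \int_J |f(y)|^p \, dy.
\]
This is a standard equivalence-of-norms statement on the two-dimensional space of affine functions restricted to $J$: the $L^p$-average over $J$ and the sup-norm over $CJ$ (which dominates $|f(0)|$ since $0 \in CJ$) define equivalent norms, with comparability constants depending only on $C$ and $p$ by a simple compactness/rescaling argument. Indeed, rescaling $J$ to a fixed reference interval reduces this to an equivalence of norms on the two-dimensional space of affine functions on a fixed compact set, which is trivial.

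Granted this one-dimensional inequality, the proof is immediate: for each fixed $x \in I$, let $f_x(y) := L(x,y) = (\alpha + \beta x) + \gamma y$, an affine function of $y$. Then $f_x(0) = \tilde{L}(x,y)$ (independent of $y$), and applying the one-dimensional inequality gives
\[
    |\tilde{L}(x,y)|^p = |f_x(0)|^p \leq \frac{C_p}{|J|} \int_J |L(x,y')|^p \, dy'.
\]
Integrating this over $(x,y) \in I \times J = Q$ and noting that the right-hand side is independent of $y$ (the $y$-integral contributes a factor of $|J|$ that cancels), we obtain
\[
    \int_Q |\tilde{L}(x,y)|^p \, dx\, dy \leq C_p \int_I \int_J |L(x,y')|^p \, dy' \, dx = C_p \int_Q |L|^p \, dx\, dy,
\]
as desired.

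There is no real obstacle here; the statement is essentially the pointwise-in-$x$ application of an elementary one-variable norm equivalence, and the hypothesis $0 \in CJ$ is used precisely to ensure that evaluation at $y=0$ is controlled by the $L^p$-average over $J$ for affine functions.
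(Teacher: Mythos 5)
Your proof is correct. The paper actually omits the proof of Lemma \ref{lem:s2} entirely (``We leave the proof of this inequality to the reader''), so there is nothing to compare against; your argument---Fubini plus the one-variable norm equivalence for affine functions on an interval, with the hypothesis $0 \in CJ$ rescaling to a uniformly bounded evaluation point---is precisely the natural way to fill that gap, and the use of compactness to make the constant depend only on $C$ and $p$ is handled correctly.
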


We leave the proof of this inequality to the reader.

\section{The Set $E$}

Let $E = \ue \times \{0 \} \subset \R^2$, where $\ue \subset \R$ is a finite set. Let $N = \# E \geq 2$. We suppose $\max \ue = 2^{-11}$ and $\min \ue = -2^{-11}$. 

We will find it convenient to add the point $(-1,0)$ to $E$; we set $\ue^+ = \ue \cup \{-1\}$, and let $E^+ = \ue^{+} \times \{0\}$. 

We will work with functions $f: E \to \R$ and with functions $f^{+}: E^{+} \to \R$.

\section{The Calder\'on-Zygmund Decomposition}
Starting with the square $Q^0$, we perform repeated bisection \`a la Calder\'on-Zygmund, stopping at a square $Q$ provided $\# (E \cap 3Q) \leq 1$. Recall $3Q$ denotes an \emph{open} square of side length $3\delta_Q$. This process results in a decomposition of $Q^0$ consisting of finitely many \emph{closed} \underline{Calder\'on-Zygmund squares}. We write CZ to denote the set of Calder\'on-Zygmund squares. Equivalently, the CZ squares are precisely the maximal dyadic squares $Q$ such that $\#(E \cap 3Q) \leq 1$.

\begin{remark}\label{r:contact}
    All the $\CZ$ squares $Q$ that we encounter have one of the following forms:
    \begin{align*}
        &\text{``Contact Squares"} && I \times [0, |I|] \text{ or } I \times [-|I|, 0] \\
        &\text{``Contactless Squares"} &&I \times[|I|, 2|I|] \text{ or } I \times [-2|I|, -|I|],
    \end{align*}
    for a dyadic interval $I$. This follows from an obvious induction, together with the observation that $E \cap 3Q= \emptyset$ for contactless squares $Q$ (because $E \subset \R \times \{0\}$, while the open square $3Q$ is disjoint from $\R \times \{0\}$).
\end{remark}

%{\color{blue} For $Q \in CZ$, we have $\delta_Q \leq 1/2$ (If $\delta_Q \geq 1$, $E \subset 3Q$ and $\#E \geq 2$, a contradiction).}

We write $Q \lra Q'$ for CZ squares $Q, Q'$ to indicate that $Q\cap Q' \neq \emptyset$. If the CZ squares $Q, Q'$ {satisfy $Q\cap Q'\neq \emptyset$}, then $\frac{1}{2} \delta_Q \leq \delta_{Q'} \leq 2 \delta_Q$. We recall the standard proof: Suppose {$Q\cap Q'\neq\emptyset$} with $\delta_{Q'} \leq \frac{1}{4} \delta_Q$. Then the dyadic parent $(Q')^+$ of $Q'$ satisfies $3(Q')^+ \subset 3Q$. Since $\#(E \cap 3Q) \leq 1$, it follows that $\# (E \cap {3(Q')^+}) \leq 1$, and consequently we should not have bisected $(Q')^+$ to arrive at the CZ square $Q'$. This contradiction shows that $\delta_{Q'} \geq \frac{1}{2} \delta_Q$. By the same reasoning, $\delta_Q \geq \frac{1}{2}\delta_{Q'}$. In light of this ``good geometry" we recall a familiar bound:

{
%We recall the \\ %$\{Q_\nu\}_{\nu=1}^{\nu_{\max}}$ contained in $Q^0$, with pairwise disjoint interiors. 

\underline{Patching Estimate} \cite[Lemma 9.1]{arie3}, \cite[Lemma 5]{CI23}: Let $(\theta_Q)_{Q \in CZ} \subset C^{\infty}(Q^0)$ be a smooth partition of unity satisfying for $Q \in CZ$, $0 \leq \theta_Q \leq 1$; $\theta_Q$ vanishes on $Q^0 \setminus 1.1 Q$; $|\p^\alpha \theta_Q| \leq C \delta_{Q}^{-|\alpha|}$ for $|\alpha| \leq 2$; $\theta_Q|_{\frac{1}{2} Q} \equiv 1|_{\frac{1}{2}Q}$; and $\sum_{Q \in CZ} \theta_Q \equiv 1$ on $Q_\inner$. Suppose $G_Q\in L^{2,p}(1.1 Q)$ for each $Q \in CZ$ satisfying $1.1Q \cap Q_\inner \neq \emptyset$. We define the function $G: Q_\inner \to \R$ as $G(z): = \sum_{Q \in CZ} \theta_Q(z) G_Q(z)$. We have
\begin{align}
    \|G\|_{L^{2,p}(Q_{\inner})}^p \leq C_p \bigg(&\sum_{\substack{Q \in CZ: \\ 1.1Q \cap Q_{\inner} \neq \emptyset}} \|G_Q\|_{L^{2,p}(1.1 Q)}^p \nonumber \\
    &+ \sum_{|\alpha|\leq 1}\sum_{\substack{Q, Q'\in CZ: \; Q \leftrightarrow Q'\\ 1.1Q \cap Q_{\inner} \neq \emptyset, \;  1.1Q' \cap Q_{\inner} \neq \emptyset}} \delta_{Q}^{|\alpha|p-2p} \int_{1.1 Q \cap 1.1 Q'} |\partial^\alpha(G_Q - G_{Q'})|^p \, dz \bigg). \label{eq:patching}
\end{align}}

Let $\pi: \R^2 \to \R$ be the projection $(x,y) \mapsto x$. The \underline{shadow} of a dyadic square $Q$ is the dyadic interval $\pi (Q)$. 

We make the following observation regarding shadows:

\begin{lem}\label{l:parent-shadow}
    Let $I = \pi (Q)$ for a CZ square $Q$. If $|I| \leq \frac{1}{4}$, then $I^+ = \pi (Q')$ for a CZ square $Q'$.
\end{lem}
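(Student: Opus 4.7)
The plan is to exhibit an explicit CZ square $Q'$ with $\pi(Q')=I^+$ by taking one dyadic step away from the $x$-axis, starting from the dyadic parent of $Q$. First, since $|I|\leq 1/4<1$, the dyadic parent $Q^+$ of $Q$ is well-defined, and its shadow (the dyadic interval of length $2|I|$ containing $I$) is exactly $I^+$. By Remark~\ref{r:contact} applied to $Q$, a short case check on the four possible contact/contactless forms of $Q$ forces $Q^+$ to take one of the two forms $I^+\times[0,2|I|]$ or $I^+\times[-2|I|,0]$. I would then define $Q'$ to be the dyadic sibling of $Q^+$ in the $y$-direction that lies on the side \emph{away} from the $x$-axis:
\[
Q':=\begin{cases}I^+\times[2|I|,4|I|],&\text{if }Q^+=I^+\times[0,2|I|],\\ I^+\times[-4|I|,-2|I|],&\text{if }Q^+=I^+\times[-2|I|,0].\end{cases}
\]
The hypothesis $|I|\leq 1/4$ gives $4|I|\leq 1$, so $Q'\subset Q^0$ is a legitimate dyadic square, and $\pi(Q')=I^+$ by construction.

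Next I would verify that $Q'$ is CZ. Because $Q'$ is contactless, the open triple $3Q'$ is disjoint from $\R\times\{0\}\supset E$, so $\#(E\cap 3Q')=0$ and the stopping condition holds at $Q'$. For maximality, I would note that $(Q')^+$ and $(Q^+)^+$ coincide, being the common dyadic parent $(I^+)^+\times[0,4|I|]$ or $(I^+)^+\times[-4|I|,0]$. Since $Q\subsetneq Q^+$ and $Q$ is CZ, the CZ algorithm must have bisected $Q^+$, giving $\#(E\cap 3Q^+)\geq 2$. A short one-dimensional check yields $3I^+\subset 3(I^+)^+$ (concentric up to a shift of $|I|$, with lengths $6|I|$ and $12|I|$), while the $y$-range of $3(Q')^+$, namely $(-4|I|,8|I|)$ or $(-8|I|,4|I|)$, covers $0$. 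Hence $E\cap 3Q^+\subset 3(Q')^+$, so $\#(E\cap 3(Q')^+)\geq 2$, meaning $(Q')^+$ fails the stopping condition. This proves maximality, so $Q'\in\CZ$ with $\pi(Q')=I^+$, as desired.

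The argument is essentially bookkeeping rather than a deep step; the only thing to watch is the shift of centers when comparing $3I^+$ with $3(I^+)^+$ and the corresponding $y$-range inclusions, and the role of the hypothesis $|I|\leq 1/4$ is precisely to keep $Q'$ inside $Q^0$ so that it participates in the CZ construction.
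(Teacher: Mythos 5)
Your proof is correct and follows essentially the same route as the paper: you go up to the grandparent $Q^{++}=(Q^+)^+$ and exhibit a contactless child with shadow $I^+$ as the desired CZ square $Q'$, verifying the stopping condition from contactlessness and maximality from $\#(E\cap 3Q^{++})\ge 2$. The paper phrases this more briefly by first noting that $Q^+$ and $Q^{++}$ must be contact squares (since contactless squares never get bisected), while you instead derive the form of $Q^+$ directly from Remark~\ref{r:contact} and carry out the set inclusions explicitly; the substance is the same.
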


\begin{proof}
    We note that $Q$ arises by bisecting its dyadic parent $Q^+$, which in turn arises by bisecting $Q^{++}$. Both $Q^+$ and $Q^{++}$ are contact squares, else we would not have bisected them. Then one of the two contactless squares $Q'$ obtained by bisecting $Q^{++}$ belongs to CZ and satisfies $\pi (Q') = I^+$. See Figure \ref{fig:test} for a depiction of this.
\end{proof}

\begin{figure}[h]
\begin{subfigure}{.5\textwidth}
\centering
\includegraphics[width=0.7\textwidth]{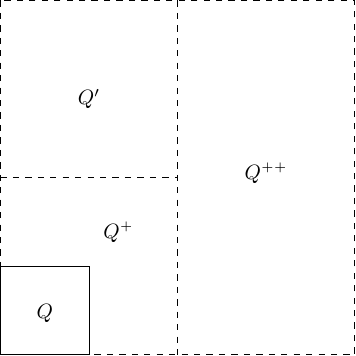}
\end{subfigure}
\begin{subfigure}{.5\textwidth}
\centering\includegraphics[width=0.7\textwidth]{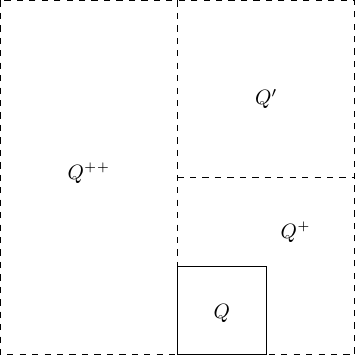}
\end{subfigure}
\caption{Two possible scenarios for a {contactless} child $Q'$ of $Q^{++}$.} \label{fig:test}
\end{figure}

The square $Q^0$ satisfies $\#(E \cap 3Q^0) \geq 2$ since $(-2^{-11},0), (2^{-11},0) \in E$. Hence every CZ square $Q$ arises by bisecting $Q^+$, so $\#(E \cap 3Q^+) \geq 2$, else we wouldn't have bisected $Q^+$. This means that $\#(\ue \cap {3I^+}) \geq 2$, where ${I^+ = \pi (Q^+)}$. 

For each shadow $I$, we fix a point $\hat{x}(I) \in \ue \cap 3I^+ \subset \ue \cap 7I$. 

Let $Q$ be a CZ square. We say that $Q$ is \rel{} if $1.1Q \cap Q_{\inner} \neq \emptyset$, {see Section \ref{s:squares},} and we denote the set of \rel{} CZ squares by $\CZ_{\text{rel}}$

We note that every \rel{} CZ square $Q$ satisfies $\delta_Q \leq 2^{-8}$. Indeed if $\delta_Q \geq 2^{-7}$ and $1.1Q \cap Q_{\inner} \neq \emptyset$, then $3Q \supset Q_{\inner}$. Hence $\# (E \cap 3Q) \geq 2$, so $Q$ cannot be a CZ square.

A \underline{\rel{} shadow} is defined to be the shadow of a \rel{} CZ square. \rel{} shadows $I$ thus satisfy ${1.1}I \cap [-2^{-10}, 2^{-10}] \neq \emptyset$ and $|I| \leq 2^{-8}$.

\section{Groups of Shadows and Squares} \label{sec:gps}

Let $I = \pi (Q)$ be the shadow of $Q \in CZ$.

{Recall, from Section \ref{sec:constants} we assume $K>0$ larger than a universal constant.} We say that $I$ is \easy{} if 
\begin{equation}\label{e:k-easy}
    \diam(\ue^+ \cap KI) \geq K^{-1} |I|.
\end{equation}
Otherwise, we say that $I$ is \hard{}.

Fix $x^* \in \ue^+$. Then we define $\Gp(x^*)$ (the ``group" of $x^*$) to be the set of all shadows $I$ such that 
\begin{itemize}
    \item[(a)] $x^* \leq \inf 41I$, and
    \item[(b)] $\ue^+\cap (x^*, \inf 9I] = \emptyset.$
\end{itemize}

The following elementary lemma will be useful when decomposing shadows into disjoint subfamilies.
\begin{lem}\label{l:relevant-hard}
    {Let $I$ be a shadow. Then $I\in\Gp(x^*)$ for at most one $x^*\in\ue^+$. If in addition $I$ is a \hard{} \rel{} shadow, then provided $K\geq 41$, there exists $x^* \in \ue^+$ such that $I\in\Gp(x^*)$.}
\end{lem}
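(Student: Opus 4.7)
The plan is to argue uniqueness and existence separately, both by direct interval-geometry arguments. For uniqueness, I would suppose $I \in \Gp(x_1^*) \cap \Gp(x_2^*)$ for distinct $x_1^* < x_2^*$ in $\ue^+$. Condition (a) applied to $x_2^*$ gives $x_2^* \leq \inf 41I < \inf 9I$, since $41I$ is wider than $9I$ about the same center. Hence $x_2^* \in \ue^+ \cap (x_1^*, \inf 9I]$, directly contradicting condition (b) for $x_1^*$.

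For existence under the \hard{} \rel{} hypothesis with $K \geq 41$, the plan is to set
\[
x^* := \max\{x \in \ue^+ \colon x \leq \inf 41 I\}.
\]
First I would verify this set is nonempty so that $x^*$ is well-defined. Since $I$ is \rel{}, its center $c$ satisfies $|c| \leq 2^{-10} + 0.55|I|$ with $|I| \leq 2^{-8}$; a short estimate then gives $\inf 41 I = c - 20.5|I| > -1$, so the designated point $-1 \in \ue^+$ is a valid candidate. Property (a) is immediate from the definition of $x^*$.

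For property (b), I would argue by contradiction: suppose some $x' \in \ue^+ \cap (x^*, \inf 9I]$ exists. By maximality of $x^*$, necessarily $x' > \inf 41I$, so $x' \in (\inf 41 I, \inf 9 I] \subset 41 I \subset KI$ for $K \geq 41$. On the other hand, the earlier observation that the dyadic parent shadow $I^+$ satisfies $\#(\ue \cap 3I^+) \geq 2$, together with the inclusion $3I^+ \subset 7I$ (the center of $I^+$ is within $|I|/2$ of $c$, and $3I^+$ has length $6|I|$), produces a point $a \in \ue \cap 7I \subset \ue^+ \cap KI$. Writing $c$ for the center of $I$, we get $a > c - 3.5|I|$ and $x' \leq c - 4.5|I|$, so $a - x' > |I|$. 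Therefore $\diam(\ue^+ \cap KI) \geq a - x' > |I| \geq K^{-1}|I|$, contradicting the \hard{} hypothesis $\diam(\ue^+ \cap KI) < K^{-1}|I|$.

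I do not anticipate a substantive obstacle; the entire argument is bookkeeping of interval dilations. The one design choice worth flagging is anchoring $x^*$ at $\inf 41 I$ rather than at $\inf 9 I$: this forces any putative violator of (b) into the narrow gap $(\inf 41 I, \inf 9 I]$, which is precisely where the parent condition $\#(\ue \cap 3I^+) \geq 2$ and the \hard{} bound $K^{-1}|I|$ can be made to collide.
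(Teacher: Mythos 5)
Your proof is correct and follows essentially the same approach as the paper's. The only superficial difference is that you anchor $x^*$ at $\inf 41I$ (so property (a) is automatic and you argue (b) by contradiction), whereas the paper anchors at $\inf 9I$ (making (b) automatic and arguing (a)); the two choices are dual and produce the same $x^*$, since the \hard{} hypothesis together with the existence of $\hat{x}(I)\in \ue\cap 7I$ precludes any point of $\ue^+$ in the gap $(\inf 41I,\inf 9I]$.
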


\begin{proof}
{Let $I$ be a shadow; and suppose} (a) and (b) hold for $x^* \in \ue^+$. Because $x^*$ is the maximal element of $(-\infty, \inf 9I] \cap \ue^+$, (a) and (b) cannot also hold for another element of $\ue^+$. %Consequently, any given shadow $I$ belongs to $\Gp(x^*)$ for at most one $x^* \in \ue^+$.

Now let $I$ be a \hard{} \rel{} shadow. Then since $1.1 I \cap [-2^{-10}, 2^{-10}] \neq \emptyset$ and $|I| \leq 2^{-8}$, we have $-1 < \inf 9I$, hence $\tilde{x} < \inf 9I$ for some $\tilde{x} \in \ue^+$ (we added $-1$ to $\ue$ to form $\ue^+$ for this reason). Let
\[
    x^* := \max\{\tilde x\in \ue^+ : {\tilde x \leq \inf 9 I} \}.
\]
We have two possibilities; either $x^* \in 41 I\setminus 9I$ or $x^* \leq \inf 41 I$. Suppose $x^* \in 41I{\setminus 9I}$ and recall that there exists $\hat{x}(I) \in \ue\cap 7I$. Thus, $x^*, \hat{x}(I) \in \ue^+ \cap 41 I$ and $|x^* - \hat{x}(I)| \geq |I|$, implying $I$ is \easy{}. This contradicts the assumption $I$ is \hard{}, so we must have $x^* \leq \inf 41 I$. {By our choice of $x^*$, property (b) also holds. Thus, $I \in \Gp(x^*)$.}
\end{proof}

Until further notice, we fix a point $x^* \in \ue^+$ and study its group $\Gp(x^*)$. We assume that $\Gp(x^*) \neq \emptyset$.

\begin{lem} \label{lem:9.1}
    Let $J, J' \in \Gp(x^*)$, then $9J \cap 9J' \neq \emptyset$.
\end{lem}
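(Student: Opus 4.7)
I would argue by contradiction. Suppose $9J \cap 9J' = \emptyset$. Since $9J$ and $9J'$ are open intervals, one lies entirely to the left of the other, so without loss of generality $\sup 9J \leq \inf 9J'$.

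The key tool is the auxiliary point $\hat{x}(J) \in \ue \cap 7J$ guaranteed to exist for every shadow. I will show that $\hat{x}(J)$ violates property (b) for $J'$. First, because $7J \subset 41J$, we have $\inf 41J < \inf 7J < \hat{x}(J)$, and by property (a) for $J$, namely $x^* \leq \inf 41 J$, we conclude $\hat{x}(J) > x^*$. Second, since $7J \subset 9J$, we have $\hat{x}(J) < \sup 7J < \sup 9J \leq \inf 9J'$. Combining these two inequalities,
\[
    \hat{x}(J) \in \ue^+ \cap (x^*, \inf 9J'],
\]
which contradicts property (b) for $J'$. The symmetric case (when $9J'$ lies to the left of $9J$) follows by swapping the roles of $J$ and $J'$ and using $\hat{x}(J')$ instead.

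There is no real obstacle here; the statement is essentially a geometric consequence of the two defining conditions of $\Gp(x^*)$, and the content lies in recognizing that the gap between the factors $9$ in (b) and $41$ in (a) leaves enough room for $\hat{x}(J) \in 7J$ to sit strictly between $x^*$ and $\inf 9J'$. The only mild subtlety is keeping track of open versus closed endpoints for the dilated intervals, but since $7J$ is strictly contained in $9J$, all the strict inequalities hold automatically and no boundary cases arise.
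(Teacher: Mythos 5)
Your proof is correct and follows essentially the same argument as the paper: contradiction via the auxiliary point $\hat{x}(J)\in\ue\cap 7J$, which is shown to lie in $\ue^+\cap(x^*,\inf 9J']$, violating property (b) for $J'$. You spell out the chain of inequalities more explicitly than the paper, but the approach is identical.
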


\begin{proof}
    Suppose not. Without loss of generality, we may suppose that $9J$ lives to the left of $9J'$, or rigorously:
    \[
        x^* \leq \inf 41J < \inf 9J < \sup 9J \leq \inf 9J'.
    \]
    Since $J$ is a shadow, $\hat{x}(J) \in \ue \cap 7J$. In particular, ${\hat{x}(J)\in\ue^+\cap(x^*, \inf 9J']}$, contradicting condition (b) for $J'$.
\end{proof}

Fix a shadow $I_0 \in \Gp(x^*)$ with 
\[
    |I_0| = \min\{|I| : I\in \Gp(x^*)\}.
\]

\begin{lem} \label{lem:9.2}
    {Let $c>0$ and suppose that} $J \in \Gp(x^*)$ of length $|J|>c \dist(x^*, I_0)$. {Then $J$ is \easy{} for $K > \frac{2}{c}+19>0$.}
\end{lem}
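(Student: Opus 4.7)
The strategy is to exhibit two explicit points of $\ue^+ \cap KJ$ whose separation is comparable to $|J|$ (hence $\geq |J|/K$), certifying that $J$ is \easy{}. The two natural candidates are $x^*$ itself and the point $\hat{x}(J) \in \ue \cap 7J$ that exists by virtue of $J$ being a shadow. The whole point is that $x^*$ is ``far to the left'' of $J$ (by condition (a) of $\Gp(x^*)$), while $\hat{x}(J)$ lies in the much smaller dilate $7J$, so they are automatically separated by $\geq 17|J|$; the only nontrivial matter is to verify that $x^*$ still lies in $KJ$.

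\textbf{Step 1: $x^* \in KJ$ for $K > 2/c + 19$.} I will bound $\dist(x^*, J)$ by routing through $I_0$. By hypothesis $\dist(x^*, I_0) < |J|/c$. By Lemma~\ref{lem:9.1}, $9J \cap 9I_0 \neq \emptyset$, which gives $\dist(J, I_0) \leq 4|J| + 4|I_0|$. The minimality of $|I_0|$ among shadows in $\Gp(x^*)$ yields $|I_0| \leq |J|$, so $\dist(J, I_0) \leq 8|J|$. Since $x^* \leq \inf 41J$ and $x^* \leq \inf 41 I_0$, a short case analysis on the relative horizontal order of $J$ and $I_0$ (with $x^*$ always to the left of both) gives
\[
\dist(x^*, J) \leq \dist(x^*, I_0) + |I_0| + \dist(I_0, J) \leq \left( \tfrac{1}{c} + 1 + 8 \right)|J| = \left(\tfrac{1}{c} + 9\right)|J|.
\]
Since $x^* \in KJ$ iff $\dist(x^*, J) < (K-1)|J|/2$, the assumption $K > 2/c + 19$ gives $x^* \in KJ$.

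\textbf{Step 2: Lower bound on $|x^* - \hat{x}(J)|$.} From condition (a) of $\Gp(x^*)$, $x^* \leq \inf 41J = \ctr(J) - \tfrac{41}{2}|J|$. From $\hat{x}(J) \in 7J$, $\hat{x}(J) \geq \inf 7J = \ctr(J) - \tfrac{7}{2}|J|$. Subtracting,
\[
|\hat{x}(J) - x^*| \;\geq\; \tfrac{41-7}{2}|J| \;=\; 17|J|.
\]
Also $\hat{x}(J) \in 7J \subset KJ$ provided $K \geq 7$, which is automatic since $K$ is taken larger than a fixed universal constant.

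\textbf{Step 3: Conclusion.} Both $x^*$ and $\hat{x}(J)$ lie in $\ue^+ \cap KJ$ (using Step~1 and Step~2 respectively), so
\[
\diam(\ue^+ \cap KJ) \;\geq\; |x^* - \hat{x}(J)| \;\geq\; 17|J| \;\geq\; K^{-1}|J|,
\]
the last inequality holding for any $K \geq 1/17$. Thus $J$ satisfies \eqref{e:k-easy} and is \easy{}.

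The only mildly delicate point is Step~1, where the exact constant $19$ is dictated by the combination $1 + 8 = 9$ (i.e., $|I_0| \leq |J|$ plus the gap estimate $\dist(J,I_0) \leq 8|J|$). All other steps are immediate from the definitions and Lemma~\ref{lem:9.1}; there is no substantial obstacle.
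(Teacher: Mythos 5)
Your proof is correct and follows the same approach as the paper: exhibit $x^*$ and $\hat{x}(J)$ as two elements of $\ue^+ \cap KJ$ separated by at least $K^{-1}|J|$, using Lemma~\ref{lem:9.1} and the minimality of $|I_0|$ to control $\dist(x^*,J)$. You are simply more explicit about the bookkeeping that forces the constant $K > 2/c + 19$, a computation the paper states without elaboration; one tiny cosmetic point is that the triangle inequality $\dist(x^*,J) \leq \dist(x^*,I_0) + |I_0| + \dist(I_0,J)$ holds unconditionally, so the ``short case analysis'' you invoke is not actually needed.
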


\begin{proof}
    {Fix $c>0$ and $J$ as in the statement of the lemma.} We have $9J \cap 9I_0 \neq \emptyset $ by Lemma \ref{lem:9.1}, and $|J| \geq |I_0|$ by the definition of $I_0$. Since also $|J|>c \dist(x^*,I_0)$, it follows that {$x^* \in K J$ for any $K>\frac{2}{c}+19$.} Thus, $\hat{x}(J)$ and $x^*$ both belong to $\ue^+ \cap KJ$. Moreover {property (a) guarantees that} $x^* \leq \inf 41 J$, whereas $\hat{x}(J) \in 7J$. Therefore $|\hat{x}(J) - x^*| \geq |J|$. So $\diam(KJ \cap \ue^+) \geq K^{-1}|J|$, proving that $J$ is \easy{}.
\end{proof}

{Now consider the family $I_0 \subset I_1 \subset ... \subset I_L$}, where each $I_\ell$ ($\ell \geq 1$) is the dyadic parent of $I_{\ell-1}$, and $L$ is the largest integer for which all the $I_\ell$ ($\ell \leq L$) belong to $\Gp(x^*)$. {We first demonstrate that $I_L$ satisfies the hypothesis of Lemma \ref{lem:9.2} with $c = \frac{1}{43}$.

\begin{lem} \label{lem:9.3}
    The interval $I_L$ satisfies $|I_L|\geq \frac{1}{42} \dist(x_*, I_0)$.
\end{lem}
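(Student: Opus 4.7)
The plan is to exploit the maximality of $L$: the dyadic parent $I_{L+1}$ of $I_L$ must fail to lie in $\Gp(x^*)$. I first verify that $I_{L+1}$ is actually a shadow, so that this failure yields useful information. Property (a) for $I_L$ gives $x^* \leq \inf 41 I_L$, and combined with $x^* \geq -1$ (as $\ue^+ \subset [-1, 2^{-11}]$) and $I_L \subset [-1,1]$, a short computation forces $|I_L| \leq 2/21 \leq 1/4$, whence Lemma \ref{l:parent-shadow} guarantees $I_{L+1}$ is a shadow. By maximality of $L$, one of properties (a), (b) must then fail for $I_{L+1}$.

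I next rule out failure of (b). If there were $\tilde{x} \in \ue^+ \cap (x^*, \inf 9 I_{L+1}]$, then since $I_L \subset I_{L+1}$ the centers of the two intervals differ by exactly $|I_L|/2$, and a direct computation gives $\inf 9 I_{L+1} \leq \inf 9 I_L - 4|I_L| \leq \inf 9 I_L$. Hence $\tilde{x}$ would also lie in $(x^*, \inf 9 I_L] \cap \ue^+$, contradicting property (b) for $I_L$.

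So (a) must fail for $I_{L+1}$: $x^* > \inf 41 I_{L+1}$. An analogous center calculation, taking the worse of the two cases according to whether $I_L$ is the left or right half of $I_{L+1}$, yields $\inf 41 I_{L+1} \geq \inf I_L - 41|I_L|$, hence $\inf I_L - x^* < 41|I_L|$. Combining this with the trivial bound $\inf I_0 - \inf I_L \leq |I_L|$ (from $I_0 \subset I_L$) and the observation that $x^* < \inf I_L \leq \inf I_0$ (so $\dist(x^*, I_0) = \inf I_0 - x^*$), one obtains $\dist(x^*, I_0) < 42|I_L|$, as desired.

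The only delicate point is the arithmetic bookkeeping for $\inf k I_{L+1}$ in terms of $\ctr(I_L)$ and $|I_L|$: because $I_L$ can sit as either the left or right half of its parent, one gets slightly different constants in the two cases, and one must use the worse case to arrive at the sharp denominator $42$. Otherwise the proof is a direct unwinding of the definitions of $\Gp(x^*)$ and the maximality of $L$.
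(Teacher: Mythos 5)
Your proof is correct and follows essentially the same route as the paper: establish $|I_L| \leq 1/4$ to invoke Lemma \ref{l:parent-shadow}, show that property (b) automatically holds for the dyadic parent of $I_L$ so that (a) must be the property that fails (else maximality of $L$ is violated), and then unwind the failure of (a) into the distance bound $\dist(x^*,I_0)<42|I_L|$. The only cosmetic difference is that the paper phrases all estimates in terms of $I_L^+$ while you work with $I_L$ directly, and the paper derives the (b)-inheritance from $I_0$ rather than from $I_L$; neither difference is substantive.
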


\begin{proof}
If $|I_L|>1/4$, then we must have $x^* \in 7I_L$ because $[-1,1] \subset 7I_L$, a contradiction of property (a) above. Thus $|I_L| \leq 1/4$. Then by Lemma \ref{l:parent-shadow}, $I_L^+$ is a shadow. Since $I_L \in \Gp(x^*)$, we have $x^* \leq \inf 41 I_L$. Suppose that $x^* \leq \inf 41 I_L^+$. Since $\inf 9(I_L^+) < \inf 9I_0$ and $\ue^+\cap (x^*, \inf 9 I_0] = \emptyset$, it follows that $\ue^+\cap (x^*, \inf 9 I_L^+] = \emptyset$. Thus, $I_L^+$ satisfies conditions (a) and (b) above, i.e., ${I_L^+} \in \Gp(x^*)$, contradicting the maximality of the index $L$. Thus we must have $x^*\in 41 I_L^+$, and
\[
    \dist(x^*, I_0) \leq 20|I_L^+| + |I_L^+| \leq 42|I_L|.
\]

%\note{A: hmm I am not sure I follow: surely since $I_L\in \Gp(x^*)$, we cannot have $x^*\in 7I_L$ because $x^*\leq \inf 41 I$? But actually, can't we say that $|I_L|\leq 1/4$ precisely for this reason? Because otherwise there is no way we can ensure that $x^*\leq \inf 41 I$ for any $x^*\in \ue^+$. I'm adding a justification at the beginning of the proof in red; {\color{red}First of all, observe that $|I_L|\leq 1/4$, since otherwise $\dist((-1,0), L) \leq \frac{7}{4}$ which would in turn imply that $x^* \geq 15 I_L$, therefore failing property (a) above.} {M:I think you're correct - I'm just implementing your correction efficiently; apologies for the error here. I've commented this out we can see it.}
\end{proof}

{We will henceforth fix $c=\frac{1}{43}$ in Lemma \ref{lem:9.2}. Then, by Lemma \ref{lem:9.2}, if $J \in \Gp(x^*)$ is \hard{}, we must have $|J| \leq \frac{1}{42} \dist(x^*, I_0)<|I_L|$.} Consequently, for some $\ell$ ($0 \leq \ell \leq {L-1}$), we have $|J| = |I_\ell|$. Since $J$ and $I_\ell$ belong to $\Gp(x^*)$, Lemma \ref{lem:9.1} tells us that $9J \cap 9I_\ell \neq \emptyset$.

We now pass from groups of shadows to groups of CZ squares.
\begin{define}
    Let $z^* = (x^*,0) \in E^+$. We define 
    \[
        \GP(z^*) := \{ Q\in \CZ : \pi(Q) \in \Gp(x^*)\},
    \]
    namely $\GP(z^*)$ consists of all CZ squares whose shadows belong to $\Gp(x^*)$. Corresponding to the sequence of shadows $I_0 \subset I_1 \subset\dots \subset I_L$ defined as above, we introduce a sequence of CZ squares $Q_0, Q_1, ..., Q_L$ by choosing $Q_\ell$ {such that $\pi(Q_\ell) = I_\ell$ for $\ell \in 0,..,L$.} For each $\ell$ we have at least one and at most four possible choices for $Q_\ell$; see Remark \ref{r:contact} above.
\end{define}

Collecting the above results regarding the properties of $\Gp(x^*)$, and noting Lemma \ref{l:parent-shadow}, we obtain the following:

\begin{lem} \label{lem:9.4}
    Let $z^* \in E^+$ with $\GP(z^*) \neq \emptyset$. Then there exist CZ squares $Q_0, Q_1, ..., Q_L \in \GP(z^*)$ {and a constant $C>0$} with the following properties:
    \begin{itemize}
        \item[(i)] $\delta_{Q_\ell} = 2^\ell \delta_{Q_0} \text{ for } 0 \leq \ell \leq L$,
        \item[(ii)] $Q_\ell \subset CQ_{\ell'} \text{ for } \ell < \ell'$,
        \item[(iii)] $Q_L$ is \easy{},
        \item[(iv)] given any \hard{} $Q \in \GP(z^*)$, there exists $\ell \; (0 \leq \ell \leq {L-1})$, with $\delta_Q =\delta_{Q_\ell} \text{ and } 100 Q \cap 100 Q_\ell \neq \emptyset$.
    \end{itemize}
\end{lem}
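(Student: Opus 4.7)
The plan is to lift the shadow-level structure already established to the CZ-square level via the obvious choice $\pi(Q_\ell)=I_\ell$. Starting from the sequence of shadows $I_0\subset I_1\subset\dots\subset I_L$ in $\Gp(x^*)$ constructed above, I would pick any CZ squares $Q_\ell$ with $\pi(Q_\ell)=I_\ell$; each exists because $I_\ell$ is a shadow, and by Remark \ref{r:contact} there are at most four choices. Each such $Q_\ell$ automatically lies in $\GP(z^*)$.

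Property (i) is immediate from $\delta_{Q_\ell}=|I_\ell|=2^\ell|I_0|=2^\ell\delta_{Q_0}$. For (ii), I would verify the inclusion $Q_\ell\subset CQ_{\ell'}$ coordinate-wise. Horizontally, nesting of the shadows gives $\pi(Q_\ell)=I_\ell\subset I_{\ell'}=\pi(Q_{\ell'})$, hence containment in the $x$-component of $CQ_{\ell'}$ for any $C>1$. Vertically, every CZ square sits in the strip $|y|\leq 2\delta_Q$ by the classification in Remark \ref{r:contact}; thus all points of $Q_\ell$ satisfy $|y|\leq 2\delta_{Q_{\ell'}}$, while the center of $Q_{\ell'}$ has $|y|\leq \tfrac{3}{2}\delta_{Q_{\ell'}}$. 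A dilation factor such as $C=10$ then suffices uniformly.

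Property (iii) follows by combining Lemma \ref{lem:9.2} (with $c=\tfrac{1}{43}$) and Lemma \ref{lem:9.3}: if $I_L$ were hard, then $|I_L|\leq \tfrac{1}{43}\dist(x^*,I_0)$, contradicting $|I_L|\geq \tfrac{1}{42}\dist(x^*,I_0)$, provided $\dist(x^*,I_0)>0$, which holds because condition (a) in the definition of $\Gp(x^*)$ places $x^*$ strictly to the left of $I_0$. For (iv), given a hard $Q\in\GP(z^*)$ with shadow $J$, Lemma \ref{lem:9.2} gives $|J|\leq \tfrac{1}{43}\dist(x^*,I_0)<|I_L|$, while the minimality of $|I_0|$ yields $|J|\geq|I_0|$. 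Since shadow lengths are powers of two, $|J|=|I_\ell|$ for a unique $\ell\in\{0,\dots,L-1\}$, giving $\delta_Q=\delta_{Q_\ell}$. Lemma \ref{lem:9.1} then supplies $9J\cap 9I_\ell\neq\emptyset$, yielding horizontal overlap of $100Q$ and $100Q_\ell$; vertical overlap is automatic because both squares lie in the strip $|y|\leq 2\delta_{Q_\ell}$, comfortably inside the $100\delta_{Q_\ell}$-tall dilations.

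The main obstacle, such as it is, will be bookkeeping: confirming that Lemma \ref{lem:9.2}'s strict inequality, Lemma \ref{lem:9.3}'s non-strict bound, and the minimality of $|I_0|$ combine without gap (hence the need to rule out $\dist(x^*,I_0)=0$), and verifying that the absolute constant $C$ in (ii) can be chosen uniformly regardless of which of the (at most four) contact-versus-contactless realizations of each $Q_\ell$ is selected. All the substantive content has already been isolated in Lemmas \ref{lem:9.1}--\ref{lem:9.3}; the present lemma is the packaging step that prepares them for use in the patching arguments to come.
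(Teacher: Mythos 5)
Your proposal is correct and follows essentially the same approach as the paper, which itself gives no separate proof but simply says that the lemma is obtained by ``collecting the above results'' (Lemmas \ref{lem:9.1}--\ref{lem:9.3} and Lemma \ref{l:parent-shadow}). You fill in exactly the details the paper leaves implicit --- the choice $\pi(Q_\ell)=I_\ell$, the vertical strip $|y|\leq 2\delta_Q$ from the contact/contactless classification of Remark \ref{r:contact} to get (ii) and the vertical overlap in (iv), the observation that $\dist(x^*,I_0)>0$ (indeed $\geq 20|I_0|$, since $x^*\leq\inf 41I_0<\inf I_0$) which is needed to make the $\tfrac{1}{43}$-versus-$\tfrac{1}{42}$ comparison strict in (iii), and the power-of-two rigidity of shadow lengths in (iv) --- all of which are correct.
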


The squares $Q_0, Q_1, ..., Q_L$ and the number $L$ in Lemma \ref{lem:9.4} depend on $z^*$; {we will thus sometimes write $Q_\ell(z^*)$ to denote the square $Q_\ell$ and emphasize the dependence on $z^*$}. We write $\Qsmall(z^*)$ to denote $Q_0$ and $\Qbig(z^*)$ to denote $Q_L$. We let 
\[
    \Gang(\ell, z^*):=  \{Q \in \GP(z^*) : \delta_Q = \delta_{Q_\ell}, \ 100Q \cap 100 Q_\ell \neq \emptyset\}.
\]
The CZ squares $Q_\ell(z^*)$ may or may not be \hard{}. We write $L(z^*)$ to denote the number $L$ associated to $z^*$ in Lemma \ref{lem:9.4}.

\begin{lem}\label{lem:9.5}
    Let $Q \in \GP(z^*)$ and $\tilde{Q} \in \GP(\tilde{z}^*)$. If $Q \lra \tilde{Q}$, then $z^* = \tilde{z}^*$.
\end{lem}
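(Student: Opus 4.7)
The plan is to reduce the statement about CZ squares to a statement about their shadows, then exploit conditions (a) and (b) that define $\Gp(x^*)$. Write $I = \pi(Q)$ and $\tilde I = \pi(\tilde Q)$, and let $x^*, \tilde x^* \in \ue^+$ be such that $z^* = (x^*,0)$ and $\tilde z^* = (\tilde x^*, 0)$. Since $Q\cap \tilde Q\neq \emptyset$, projecting any common point to the $x$-axis gives $I\cap \tilde I\neq \emptyset$; and since $Q,\tilde Q$ are CZ squares with $Q \lra \tilde Q$, their side lengths (hence $|I|$ and $|\tilde I|$) differ by at most a factor of two. In particular, $I\cap \tilde I\neq \emptyset$ forces $|\text{ctr}(I)-\text{ctr}(\tilde I)| \leq (|I|+|\tilde I|)/2$.

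Suppose, for contradiction, that $x^* \neq \tilde x^*$. Without loss of generality (by swapping the roles of $I$ and $\tilde I$), assume $x^* < \tilde x^*$. The goal is to produce a point of $\ue^+$ inside $(x^*, \inf 9I]$, contradicting condition (b) for $I \in \Gp(x^*)$. By condition (a) for $\tilde I \in \Gp(\tilde x^*)$ we have $\tilde x^* \leq \inf 41 \tilde I$, so it suffices to show
\[
    \inf 41 \tilde I \; \leq \; \inf 9 I.
\]
Writing $\inf 41 \tilde I = \text{ctr}(\tilde I) - \tfrac{41}{2}|\tilde I|$ and $\inf 9I = \text{ctr}(I) - \tfrac{9}{2}|I|$, this reduces to
\[
    \text{ctr}(\tilde I) - \text{ctr}(I) \;\leq\; \tfrac{41}{2}|\tilde I| - \tfrac{9}{2}|I|.
\]
Using $|\text{ctr}(\tilde I)-\text{ctr}(I)| \leq (|I|+|\tilde I|)/2$ on the left and $|I| \leq 2|\tilde I|$ on the right, the required inequality follows with room to spare (it simplifies to $10|I| \leq 40|\tilde I|$).

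Thus $\tilde x^* \in \ue^+$ lies in $(x^*, \inf 9I]$, violating condition (b) in the definition of $\Gp(x^*)$. The symmetric assumption $\tilde x^* < x^*$ produces the analogous contradiction using (a) for $I$ and (b) for $\tilde I$, where one instead checks $\inf 41 I \leq \inf 9 \tilde I$. Therefore $x^* = \tilde x^*$, proving $z^* = \tilde z^*$.

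No real obstacle is expected; this is essentially a bookkeeping exercise with the dilation factors $41$ and $9$ appearing in conditions (a) and (b), and the factor of two between the sizes of adjacent CZ shadows. The calibration is exactly why these particular constants $41$ and $9$ appear in the definition of $\Gp(x^*)$: they leave enough slack so that, even when $\tilde I$ is as much as twice the size of $I$, the interval $41\tilde I$ still starts to the left of $9I$.
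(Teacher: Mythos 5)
Your proof is correct and proceeds by essentially the same route as the paper: pass to shadows, use the comparability of their lengths and the overlap to show $\inf 41\tilde I \leq \inf 9I$, then invoke conditions (a) and (b). The only cosmetic difference is that you derive a direct contradiction to condition (b) for $I$ after assuming $x^* < \tilde x^*$, whereas the paper fixes the WLOG by ordering $\inf 9I \leq \inf 9\tilde I$ and identifies both $x^*$ and $\tilde x^*$ as the maximal element of $\ue^+ \cap (-\infty, \inf 9I]$; the arithmetic with the dilation factors $41$, $9$, and the factor-of-two comparability is identical.
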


\begin{proof}
        Suppose that the shadows $J,\tilde{J}$ satisfy $J \in \Gp(x^*)$ and $\tilde{J} \in \Gp(\tilde{x}^*)$ for two points $x^*, \tilde x^* \in \ue^+$. Suppose that $J \cap \tilde{J} \neq \emptyset$, and that $\frac{1}{2}|J| \leq |\tilde{J}| \leq 2 |J|$. Then we claim $x^* = \tilde{x}^*$, and the Lemma follows immediately as the shadows $\pi(Q)$ and $\pi(\tilde{Q})$ satisfy these conditions.
        
        To see the claim is true, suppose without loss of generality that $\inf 9J \leq \inf 9\tilde{J}$. We know that $\tilde{x}^* \leq \inf 41 \tilde{J}$, and our assumptions on $J, \tilde{J}$ imply that $\inf 41 \tilde{J} < \inf 9J$. Hence, $\tilde{x}^* < \inf 9J$. Moreover, by the defining property (b) for $\tilde J$ we have $\ue^+\cap(\tilde{x}^*, \inf 9\tilde{J}]=\emptyset$, and therefore $\ue^+ \cap (\tilde{x}^*, \inf 9J] = \emptyset$ also. Thus $\tilde{x}^*$ is the maximal point of $\ue^+ \cap (-\infty, \inf 9J]$. However, since $J \in \Gp(x^*)$, we know that $x^*$ is the maximal point of $\ue^+ \cap (-\infty, \inf 9J]$. We conclude that $x^* = \tilde{x}^*$.
\end{proof}

We conclude this section by estimating the number of \easy{} CZ squares. Recall that $N = \# E$.

\begin{lem} \label{lem:9.6}
    The number of \easy{} CZ squares is at most $C(K) \cdot N$. Moreover, the number of CZ squares $Q$ such that $1.1Q \cap E \neq \emptyset$ is at most $C N$.
\end{lem}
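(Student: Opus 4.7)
The proof would separate into two pieces corresponding to the two claims.

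For the ``moreover'' clause (bounding CZ squares with $1.1Q \cap E \neq \emptyset$ by $CN$): My first step is to observe via Remark~\ref{r:contact} that contactless CZ squares $Q$ have $3Q$ disjoint from $\mathbb{R} \times \{0\}$, hence $1.1Q \cap E = \emptyset$; so only contact CZ squares can contribute. Each such $Q$ has shadow $I = \pi(Q)$ with $1.1I \cap \ue \neq \emptyset$, and the contact CZ shadows (leaves of the shadow-bisection tree where $I$ is bisected iff $\#(\ue \cap 3I) \geq 2$) tile $[-1,1]$ modulo boundaries. I would then invoke the CZ good geometry---adjacent squares have sidelengths within a factor $2$---to argue that each $x \in \ue$ lies in $1.1I$ for only $O(1)$ shadow leaves (the leaf containing $x$ plus the boundedly many comparably-sized neighbors whose boundary is within $0.05|I|$ of $x$). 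Summing over $x \in \ue$ yields $O(N)$ shadows, and multiplying by $2$ (one contact CZ square above and below the $x$-axis per shadow) gives $\leq CN$.

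For the easy-count: Each shadow $I$ supports at most $4$ CZ squares (at most two contact and two contactless, above/below), so it suffices to bound the number of easy shadows by $C(K) N$. For each easy shadow $I$ I would assign the canonical pair $(x_1(I), x_2(I)) := (\min, \max)(\ue^+ \cap KI)$, which satisfies $x_2 - x_1 \geq K^{-1}|I|$ by easiness. Per fixed pair, the constraint $|I| \in [(x_2-x_1)/K,\, K(x_2-x_1)]$ restricts the scale to an $O(\log K)$-family of dyadic values, while the requirements $KI \supset [x_1,x_2]$ and $KI$ avoiding the $\ue^+$-neighbors $L<x_1$ and $R>x_2$ restrict the positioning to $O(K)$ dyadic choices per scale. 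So at most $C(K)$ easy shadows are charged to each pair.

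The main obstacle will be bounding the number of distinct charged pairs. Naively there are up to $O(N^2)$ pairs, which is too many. My plan is to argue that for each fixed $x_1 \in \ue^+$, the values $x_2$ appearing in charged pairs $(x_1, x_2)$ are few: as $|I|$ grows within the scale band permitted by easiness and the condition that $x_1$ is the leftmost of $\ue^+ \cap KI$, the rightmost point $x_2$ transitions through $\ue^+$-points only $O(C(K))$ times before the scale exits the band (namely $|I| > K(x_2-x_1)$). This gives only $O(C(K))$ distinct $x_2$'s per $x_1$, so at most $O(C(K) N)$ pairs in total, and combining with the $C(K)$ shadows per pair gives the claimed $C(K) N$ bound (after absorbing constants into the $K$-dependent factor).
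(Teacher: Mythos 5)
Your treatment of the ``moreover'' clause is correct and in the same spirit as the paper, which simply notes that for each $z\in E$ there are at most $C$ CZ squares $Q$ with $z\in 1.1Q$ (by good geometry of the CZ decomposition) and sums over $z$. Your shadow-based phrasing is a fine alternative. Likewise, your charging framework for the \easy{} count---assign a pair of $\ue^+$-points to each \easy{} shadow, show each pair is charged by at most $C(K)$ shadows, and then bound the number of pairs---is exactly the right structure, and your per-pair count of $O(K\log K)$ shadows is correct and matches the paper's.

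The gap is in the final step, bounding the number of distinct charged pairs. Your plan is to show that for each fixed $x_1\in \ue^+$, only $O(C(K))$ values of $x_2$ can arise, on the grounds that ``$x_2$ transitions through $\ue^+$-points only $O(C(K))$ times before the scale exits the band $|I|>K(x_2-x_1)$.'' This does not work: the upper constraint $|I|\le K(x_2-x_1)$ is not a fixed band but moves with $x_2$, and the only genuine cap on the scale is $K|I| \lesssim x_1 - x_0$ (so that $KI$ avoids the next $\ue^+$-point $x_0$ to the left of $x_1$). That cap allows $|I|$ to range over roughly $\log_2\bigl(K^2(x_1-x_0)/\mathrm{gap}(x_1)\bigr)$ dyadic scales, which is unbounded as a function of $N$ and $K$. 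Concretely, if the points of $\ue^+$ to the right of $x_1$ are geometrically spaced at distances $\sim 2^{-11-M},\,2^{-10-M},\dots,2^{-11}$ from $x_1$, and the nearest point to the left of $x_1$ is $-2^{-11}$, then one can exhibit $\Theta(M)=\Theta(N)$ distinct \easy{} CZ shadows all charged to pairs of the form $(x_1,\cdot)$ with $\Theta(N)$ distinct second coordinates, so the per-$x_1$ count is $\Theta(N)$, not $O(C(K))$. (The total pair count in this example is still $O(N)$, but that is not something a fixed-$x_1$ argument can see.) The paper resolves exactly this difficulty by invoking the Well-Separated Pairs Decomposition of Callahan--Kosaraju: one replaces the canonical pair of each \easy{} shadow by a nearby WSPD representative pair (property~(i) of the WSPD), of which there are at most $CN$ globally (property~(ii)), and then your per-pair count of $C(K)$ finishes the argument. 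If you want to avoid WSPD you would need a genuinely different global counting, not a per-$x_1$ one.
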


\begin{proof}
    For each $z \in E$, there are at most $C$ distinct CZ squares $Q$ for which $1.1Q \ni z$. Summing over $z \in E$, we see at once that the number of CZ squares $Q$ such that $1.1 Q \cap E \neq \emptyset$ is at most $CN$.

    To estimate the number of \easy{} CZ squares, we use the Well-Separated Pairs Decomposition, due to Callahan and Kosaraju \cite{ck}. As a consequence of the main result of \cite{ck}, there exist points $\{(x_\nu',x_\nu'')\}_{\nu=1}^{\nu_{\max}} \subset \ue^+ \times \ue^+ \setminus \text{Diagonal}$ with the following properties:
    \begin{enumerate}[(i)]
        \item\label{eq:wspd1} Given $(x', x'') \in \ue^+ \times \ue^+ \setminus \text{Diagonal}$, there exists $\nu$ with $|x' - x_\nu'| + |x'' - x_\nu''| < \frac{1}{100} |x'-x''|$
        \item\label{eq:wspd2} $\nu_{\max} \leq CN$.
    \end{enumerate}
    Now let $Q$ be an \easy{} CZ square with shadow $I = \pi (Q)$. By definition, there exist $x', x'' \in {\ue^+} \cap KI$ such that $|x'-x''| \geq K^{-1}|I|$. For such $x',x''$, we pick $\nu$ as in (\ref{eq:wspd1}). Thus
    \begin{align}
        |x_\nu' - x_\nu''| \geq \frac{1}{2}K^{-1}|I| \text{ and } x_\nu', x_\nu'' \in 2KI. \label{eq:wspd3}
    \end{align}

For fixed $(x_\nu', x_\nu'')$, there are most $C(K)$ dyadic intervals $I$ satisfying (\ref{eq:wspd3}). Summing over $\nu$ and applying (\ref{eq:wspd2}), we find that there are at most $C(K) N$ distinct shadows of \easy{} CZ squares. Since each shadow arises from at most four CZ squares, we conclude there are at most $C(K) N$ \easy{} CZ squares.

\end{proof}

We now take $K$ to be an absolute constant, large enough that the preceding arguments work. The constant $C(K)$ in Lemma \ref{lem:9.6} is therefore also an absolute constant, henceforth denoted by $C$. Moreover, we now write that an \easy{} shadow $I$ satisfies $\diam(\ue^+ \cap CI) \geq c|I|$, and an \easy{} CZ square $Q$ satisfies $\diam(E^+ \cap CQ) \geq c\delta_Q$.

\section{Assigning special points to CZ squares}\label{sec:special pts}
Let $Q$ be a \rel{} CZ square. We will associate to $Q$ three points of $E^+$, denoted $z_1 (Q), z_2 (Q), \uz(Q)$, as follows.

Recall {from Lemma \ref{l:relevant-hard}} that a \rel{} $\CZ$ square is either \easy{}, or else it belongs to $\GP(z^*)$ for a unique $z^* \in E^+$. Using this dichotomy, we proceed to define $z_1 (Q), z_2 (Q) \in E^+$ for any \rel{} $\CZ$ square $Q$.
\begin{enumerate}[(a)]
    \item\label{enum:easy z_1} If $Q$ is \easy{}, then {by the defining property \eqref{e:k-easy},} there exist two points $z_1, z_2 \in E^+ \cap {CQ}$ such that $|z_1 - z_2| \ge {c \delta_Q}$. Define $z_1 (Q) := z_1$ and $z_2 (Q) := z_2$.

    \item If $Q$ is \hard{} and belongs to $\GP(z^*)$ for some $z^* \in E^+$, then recall {from Lemma \ref{lem:9.4}} that the $\CZ$ square $\Qbig (z^*)$ is \easy{}. Hence, $z_1 (\Qbig (z^*))$ and $z_2 (\Qbig (z^*))$ have already been defined in case \eqref{enum:easy z_1}. Define $z_1 (Q) := z_1 (\Qbig (z^*))$ and $z_2 (Q) := z_2 (\Qbig (z^*))$. Note that since $\Qbig (z^*)$ is \easy{}, $|z_1(Q) - z_2(Q)| \geq c\delta_{\Qbig (z^*)}$.
\end{enumerate}

Now, we define the points $\uz(Q) \in E$. We proceed by cases. Let $Q$ be a \rel{} $\CZ$ square.

\begin{enumerate}[(1)]
    \item\label{enum:uz1} If $1.1Q \cap E \neq \emptyset$, then because $Q$ is a CZ square, there is a unique point in $1.1Q \cap E$. Let $\uz(Q)$ be this unique point.

    \item\label{enum:uz2} If $1.1Q \cap E = \emptyset$ and $Q$ is \easy{}, then we take $\uz(Q)$ to be any point of $E \cap 7Q$ (this set contains $E\cap 3Q^+$ and $\# (E \cap 3Q^+) \ge 2$) 
    
    \item\label{enum:uz3} If $1.1Q \cap E = \emptyset$ and $Q$ is \hard{} with $Q \in \GP(z^*)$ 
    {for some $z^*\in E^+$}, let $\uz(Q) = \uz(z^*)$, where $\uz(z^*) \in E \cap 7\Qsmall (z^*)$ is fixed for each $z^* \in E^+$. Thus $\uz(Q)$ only depends on $z^*$ and not the particular choice of $Q$.
\end{enumerate}

We have thus defined $z_1 (Q), z_2 (Q), \uz(Q)$ for all \rel{} CZ squares. We make the following simple observations.

\begin{lem}\label{lem:zQ in CQ}
Let $Q$ be a \rel{} CZ square. Then:
\begin{enumerate}[(a)]
    \item $\uz(Q) \in E$ and if $1.1Q \cap E \neq \emptyset$, then $\uz(Q)$ is the unique point in $1.1Q \cap E$.
    \item $\uz(Q) \in CQ$.
\end{enumerate}
\end{lem}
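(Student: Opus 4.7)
My plan is to verify both parts of the lemma by walking through the three cases (\ref{enum:uz1}), (\ref{enum:uz2}), (\ref{enum:uz3}) used in Section \ref{sec:special pts} to define $\uz(Q)$. Cases (\ref{enum:uz1}) and (\ref{enum:uz2}) will be essentially immediate from the definition, while case (\ref{enum:uz3}) will require a short geometric argument built on Lemma \ref{lem:9.4}.

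For part (a), in case (\ref{enum:uz1}) the point $\uz(Q)$ is chosen as an element of $1.1Q \cap E$, and this set is a singleton because $1.1Q \subset 3Q$ and $Q \in \CZ$ forces $\#(E\cap 3Q)\leq 1$. In cases (\ref{enum:uz2}) and (\ref{enum:uz3}) we chose $\uz(Q) \in E\cap 7Q$ or $\uz(Q) = \uz(z^*) \in E \cap 7\Qsmall(z^*) \subset E$, so $\uz(Q) \in E$ regardless; moreover in these two cases we have $1.1Q \cap E = \emptyset$ by assumption, so the uniqueness clause of (a) is vacuous. This handles part (a).

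For part (b), the containment $\uz(Q)\in CQ$ is direct when $Q$ falls into case (\ref{enum:uz1}) (since $\uz(Q)\in 1.1Q$) or case (\ref{enum:uz2}) (since $\uz(Q)\in 7Q$); we just need to choose $C$ large enough, say $C\geq 7$. The real content is case (\ref{enum:uz3}), where $Q$ is \hard{} and $Q\in \GP(z^*)$, and so $\uz(Q)=\uz(z^*) \in 7\Qsmall(z^*) = 7Q_0(z^*)$, which a priori lives in a square at a different scale and position than $Q$ itself.

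The plan for case (\ref{enum:uz3}) is to use Lemma \ref{lem:9.4}(iv) to relate $Q$ to one of the chain squares $Q_\ell(z^*)$ and then Lemma \ref{lem:9.4}(ii) to relate that chain square back to $Q_0(z^*)$. Specifically, by (iv) there exists $\ell\in\{0,\dots,L-1\}$ such that $\delta_Q=\delta_{Q_\ell}$ and $100Q\cap 100Q_\ell\neq\emptyset$; in particular the centers of $Q$ and $Q_\ell$ lie within $C\delta_Q$ of each other. By (ii), $Q_0\subset CQ_\ell$, and since $\delta_{Q_0}\leq \delta_{Q_\ell}=\delta_Q$, the dilate $7Q_0$ is also contained in $C'Q_\ell$ for a suitable absolute constant $C'$. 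Translating from $Q_\ell$ to $Q$ using the comparability of centers and side lengths, we conclude $7Q_0\subset C''Q$ for an absolute constant $C''$, and hence $\uz(Q)\in C''Q$, as required. The main (mild) obstacle is simply bookkeeping the chain of dilation constants so that a single absolute $C$ works in the statement; there is no analytic difficulty beyond quoting Lemma \ref{lem:9.4}.
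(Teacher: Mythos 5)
Your proof is correct and follows essentially the same route as the paper: part (a) is handled case by case from the definitions, and for part (b) in case (\ref{enum:uz3}) you invoke Lemma \ref{lem:9.4}(iv) to find $Q_\ell$ comparable to $Q$, then Lemma \ref{lem:9.4}(ii) to place $7Q_0\subset C'Q_\ell\subset C''Q$. The only difference is that you spell out part (a) in more detail where the paper dismisses it as clear.
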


\begin{proof}
   Part (a) is clear, so we tackle part (b).
   This is clear in cases \eqref{enum:uz1} and \eqref{enum:uz2}. Suppose we are in case \eqref{enum:uz3}, so $1.1Q \cap E = \emptyset$ and $Q$ is \hard{}, so $Q \in \GP(z^*)$ for some $z^* \in {E^+}$. Let $Q_0, Q_1, \cdots, Q_L$ be as in Lemma \ref{lem:9.4} {for this choice of $z^*$}. Then $\Qsmall (z^*) = Q_0$, and for some $0 \le \ell \le L$ we have $\delta_Q = \delta_{Q_\ell}$ and $100Q \cap 100Q_\ell \neq \emptyset$. By that same lemma, we have $Q_0 \subset CQ_\ell$, hence $7Q_0 \subset 7CQ_\ell$. Since $\delta_Q = \delta_{Q_\ell}$ and $100Q \cap 100Q_\ell \neq \emptyset$, we have $7CQ_\ell \subset C' Q$. Thus, $\uz(Q) \in 7\Qsmall (z^*) = 7Q_0 \subset C' Q$, completing the proof of the lemma.
\end{proof}

\section{The Norm of an Interpolant}

Let $f^+ : E^+ \to \R$, and let $F \in L^{2,p} (\R^2)$ satisfy $F = f^+$ on $E^+$. We will derive a lower bound on $\Vert F \Vert_{L^{2,p} (\R^2)}$.

Let $Q \in \CZ$ be \rel{} and \easy{}.

Recall that $z_i (Q) = (x_i (Q), 0)$ for $i = 1, 2$ and $\uz(Q) = (\ux(Q), 0)$ belong to $E^+ \cap CQ$ and satisfy $|z_1(Q)-z_2(Q)| \geq c \delta_Q$. 

Let
\begin{equation}\label{eqn:norm0}
    \hat{L}_Q(x, y)=f^{+}(\underline{z}(Q))+ {\left[\frac{f^+ \left(z_2(Q)\right)-f^+ \left(z_1(Q)\right)}{x_2(Q)-x_1(Q)}\right] \cdot(x-\underline{x}(Q)) } +\left[\Av_Q \partial_y F\right] \cdot y.
\end{equation}

Applying Corollaries \ref{cor:t1} and \ref{cor:t2} from Section \ref{sec:elementary}, we see that for all $Q \in \CZ$ which are both \rel{} and \easy{},
\begin{equation}\label{eqn:norm1}
    \sum_{|\alpha| \le 1} \delta_Q^{|\alpha| p - 2p} \int_{1.1Q} |\partial^\alpha (F - \hat{L}_Q)|^p \, dz \le C_p \int_Q [ \Mcal(|\nabla^2 F|)]^p \, dz.
\end{equation}

We want an analogue of estimate \eqref{eqn:norm1} when $Q \in \CZ$ is \rel{} and \hard{}. We won't be able to derive such an estimate for an individual $Q$. Instead, we consider together all the \hard{} $Q$ in $\GP(z^*)$ for a given $z^* \in E^+$.

Fix $z^* \in E^+$ with $\GP(z^*) \neq \emptyset$, and let $Q_0, Q_1, \cdots, Q_L$ be as in Lemma \ref{lem:9.4}. By definition, $Q_0 = \Qsmall (z_*)$ and $Q_L = \Qbig (z^*)$, and recall that $Q_L$ is \easy{} {by Lemma \ref{lem:9.4}(iii)}. Since $Q_{\ell-1} \subset CQ_\ell$ for $\ell = 1, \cdots, L$, we see from Corollary \ref{cor:t4} in Section \ref{sec:elementary} that
\begin{equation*}
    |\Av_{Q_{\ell-1}} \partial_x F - \Av_{Q_\ell} \partial_x F|^p \cdot \delta_{Q_\ell}^{2-p} \le C_p \int_{Q_\ell} [\Mcal(|\nabla^2 F|)]^p \, dz.
\end{equation*}
Thus, by Lemma \ref{lem:s1} with $x_\ell = {\Av_{Q_{\ell}} \partial_x F - \Av_{Q_{\ell+1}} \partial_x F}$, we see that 
% {\color{red} RHS indices changed to $\ell=0$ through $L-1$ throughout the argument}{\color{blue}A: I think on the right we want $\ell=1$ through $L$, no? Just so that it is consistent with the inequality above. I think you're correct, Anna. Probably worth explaining in the referee response (and re-correcting) -M  }{Ok, I'll change it back and add it to that}
\begin{equation}\label{eqn:norm2}
    \sum_{\ell=0}^{{L-1}} |\Av_{Q_{\ell}} \partial_x F - \Av_{Q_L} \partial_x F|^p \cdot \delta_{Q_\ell}^{2-p} \le C_p \sum_{{\ell=1}}^{L} \int_{Q_\ell} [\Mcal(|\nabla^2 F|)]^p \, dz.
\end{equation}
Recall that $\Gang(\ell, z^*)$ consists of dyadic squares $Q$ such that $\delta_Q = \delta_{Q_\ell}$ and $100Q \cap 100Q_\ell \neq \emptyset$. Since $\# \Gang(\ell, z^*) \le C$, it follows from \eqref{eqn:norm2} that
\begin{equation}\label{eqn:norm3}
    \sum_{\ell=0}^{{L-1}} \sum_{Q \in \Gang(\ell, z^*)} |\Av_{Q_{\ell}} \partial_x F - \Av_{Q_L} \partial_x F|^p \cdot \delta_{Q_\ell}^{2-p} \le C_p \sum_{{\ell=1}}^{L} \int_{Q_\ell} [\Mcal(|\nabla^2 F|)]^p \, dz.
\end{equation}
Also, by Corollary \ref{cor:t4}, we have for all $Q \in \Gang(\ell, z^*)$,
\begin{equation*}
    |\Av_{Q} \partial_x F - \Av_{Q_\ell} \partial_x F|^p \cdot \delta_{Q}^{2-p} \le C_p \int_{Q_\ell} [\Mcal(|\nabla^2 F|)]^p \, dz.
\end{equation*}
Summing over $Q$ and again recalling that $\# \Gang(\ell, z^*) \le C$, for each {$\ell = 0,\dots, L-1$} we obtain
\begin{equation*}
    \sum_{Q \in \Gang(\ell, z^*)} |\Av_{Q} \partial_x F - \Av_{Q_\ell} \partial_x F|^p \cdot \delta_{Q}^{2-p} \le C_p \int_{Q_\ell} [\Mcal(|\nabla^2 F|)]^p \, dz.
\end{equation*}
Summing this over $\ell$ and combining with \eqref{eqn:norm3} via the triangle inequality, we get
\begin{equation}\label{eqn:norm4}
    \sum_{\ell=0}^{L-1} \sum_{Q \in \Gang(\ell, z^*)} |\Av_{Q} \partial_x F - \Av_{Q_L} \partial_x F|^p \cdot \delta_{Q_\ell}^{2-p} \le C_p \sum_{\ell=0}^{L} \int_{Q_\ell} [\Mcal(|\nabla^2 F|)]^p \, dz.
\end{equation}

Again recalling that $\# \Gang(\ell, z^*) \le C$, and that $\delta_Q = \delta_{Q_\ell} = 2^\ell \delta_{Q_0}$ for $Q \in \Gang(\ell, z^*)$, we see that
\begin{equation}\label{eqn:norm5}
    \sum_{\ell=0}^{L-1} \sum_{Q \in \Gang(\ell, z^*)} \delta_Q^{2-p} \le C_p \delta_{Q_L}^{2-p}.
\end{equation}
Note that the condition $p < 2$ is used to sum the geometric series. Now since $Q_L$ is \easy{}, we know that $|z_1 (Q_L) - z_2 (Q_L)| \ge c \delta_{Q_L}$ and $z_1 (Q_L), z_2 (Q_L) \in E^+ \cap CQ_L$. From Corollary \ref{cor:t2}, we have
\begin{equation*}
    \delta_{Q_L}^{2-p} \left| \Av_{Q_L} \p_x F - \frac{f^+ (z_2 (Q_L)) - f^+ (z_1 (Q_L))}{x_2 (Q_L) -x_1 (Q_L)} \right|^p \leq C_p \int_{Q_L} \left[ \Mcal(|\nabla^2 F|) \right]^p dz.
\end{equation*}
Together with \eqref{eqn:norm5}, this yields
\begin{equation*}
    \sum_{\ell=0}^{L-1} \sum_{Q \in \Gang(\ell, z^*)} \left|\Av_{Q_L} \partial_x F - \frac{f^+(z_2 (Q_L)) - f^+(z_1 (Q_L))}{x_2 (Q_L) -x_1 (Q_L)} \right|^p \cdot \delta_{Q}^{2-p} \le C_p \int_{Q_L} [\Mcal(|\nabla^2 F|)]^p \, dz.
\end{equation*}
This equation combined with \eqref{eqn:norm4} and the triangle inequality gives
\begin{equation}\label{eqn:norm6}
    \sum_{\ell=0}^{L-1} \sum_{Q \in \Gang(\ell, z^*)} \left|\Av_{Q} \partial_x F - \frac{f^+(z_2 (Q_L)) - f^+(z_1 (Q_L))}{x_2 (Q_L) -x_1 (Q_L)} \right|^p \cdot \delta_{Q}^{2-p} \le C_p { \sum_{\ell =0}^{L}} \int_{Q_\ell} [\Mcal(|\nabla^2 F|)]^p \, dz.
\end{equation}
For each $Q \in \GP(z^*)$ we have fixed a point $\uz(Q) = (\ux(Q), 0) \in E \cap CQ$. Let $L_Q^{(1)} = f^+ (\uz(Q)) + (\Av_Q \nabla F) \cdot (z - \uz(Q))$. From Remark \ref{rem:t1} and Corollary \ref{cor:t1}, we have for $Q \in \Gang(\ell, z^*)$, 
\begin{equation}\label{eqn:norm7}
    \sum_{|\alpha| \le 1} \delta_Q^{|\al|p - 2p} \int_{1.1Q} |\p^{\al} (F-L^{(1)}_Q)|^p dz \leq C_p \int_{Q_\ell} \left( \Mcal(|\nabla^2 F|) \right)^{p} dz.
\end{equation}
Now let
\begin{equation}\label{eqn:norm8}
    L_Q^{(2)} = f^+ (\uz(Q)) + \frac{f^+(z_2 (Q_L)) - f^+(z_1 (Q_L))}{x_2 (Q_L) -x_1 (Q_L)} \cdot (x - \ux (Q)) + (\Av_Q \p_y F) \cdot y.
\end{equation}
Note that $L_Q^{(1)} - L_Q^{(2)} = \left(\Av_Q \p_x F - \frac{f^+(z_2 (Q_L)) - f^+(z_1 (Q_L))}{x_2 (Q_L) -x_1 (Q_L)} \right) (x - \ux(Q))$, so an elementary integration shows that
\begin{equation}\label{eqn:norm9}
    \sum_{|\al| \le 1} \delta_Q^{|\al|p - 2p} \int_{1.1Q} |\p^\al (L_Q^{(1)} - L_Q^{(2)})|^p \, dz \le C_p \cdot \delta_Q^{2-p} \left|\Av_Q \p_x F - \frac{f^+(z_2 (Q_L)) - f^+(z_1 (Q_L))}{x_2 (Q_L) -x_1 (Q_L)} \right|^p.
\end{equation}
From \eqref{eqn:norm6}, \eqref{eqn:norm7}, \eqref{eqn:norm9}, and $\# \Gang(\ell, z^*) \le C$, we have
\begin{equation}\label{eqn:norm9'}
    \sum_{\ell=0}^{L-1} \sum_{Q \in \Gang(\ell, z^*)} \sum_{|\al| \le 1} \delta_Q^{|\al|p-2p} \int_{1.1Q} |\p^\al (F - L_Q^{(2)})|^p \, dz \le C_p \sum_{\ell=0}^{L} \int_{Q_\ell} [\Mcal (|\nabla^2 F|)]^p \, dz.
\end{equation}
Recall from Lemma \ref{lem:9.4} that $Q_\ell$ for $0 \le \ell \le  L-1$ belongs to $\GP(z^*)$ and every \hard{} $Q \in \GP(z^*)$ belongs to $\Gang(\ell, z^*)$ for some $0 \le \ell \le {L-1}$. Consequently, \eqref{eqn:norm9'} implies the estimate
\begin{equation}\label{eqn:norm10}
    \sum_{\substack{Q \in \GP(z^*)\\Q \ \hard{}}} \sum_{|\al| \le 1} \delta_Q^{|\al|p-2p} \int_{1.1Q} |\p^\al (F - L_Q^{(2)})|^p \, dz \le C_p \sum_{Q \in \GP(z^*)} \int_{Q} [\Mcal (|\nabla^2 F|)]^p \, dz.
\end{equation}

Let $Q$ be a \rel{} $\CZ$ square. If $Q$ is \easy{}, we set $L_Q^\# = \hL_Q$ with $\hL_Q$ as in \eqref{eqn:norm0}. If $Q$ is \hard{}, then set $L_Q^\# = \hL_Q^{(2)}$ as in \eqref{eqn:norm8}, arising from the unique $z^*$ such that $Q \in \GP(z^*)$. In this case, we recall that $z_i (Q) = z_i (\Qbig (z^*)) = z_i (Q_L)$ for $i = 1, 2$, with $Q_L$ as in \eqref{eqn:norm8}. Hence, comparing \eqref{eqn:norm0} with \eqref{eqn:norm8}, we find that
\begin{equation}\label{eqn:norm11}
    L_Q^{\#} = f^+ (\uz(Q)) + \frac{f^+(z_2 (Q)) - f^+(z_1 (Q))}{x_2 (Q) -x_1 (Q)} \cdot (x - \ux (Q)) + (\Av_Q \p_y F) \cdot y.
\end{equation}
for all \rel{} $\CZ$ squares $Q$.

Now every \rel{} $\CZ$ square is either \easy{}, or {is \hard{} and belongs to $\GP(z^*)$ for exactly one $z^*\in E^+$}. Summing \eqref{eqn:norm1} over all \easy{} \rel{} $Q$, and summing \eqref{eqn:norm10} over all $z^* \in E^+$, we obtain that
\begin{align}
    \sum_{Q \in CZ_{\text{rel}}} \sum_{|\al| \le 1} \delta_Q^{|\al|p-2p} \int_{1.1Q} |\p^\al (F - L_Q^{\#})|^p \, dz &= \sum_{\substack{Q \in CZ_{\text{rel}}, \easy{},\\ Q \notin GP(z^*) \; \forall z^* \in E^+}} \sum_{|\al| \le 1} \delta_Q^{|\al|p-2p} \int_{1.1Q} |\p^\al (F - L_Q^{\#})|^p \, dz \nonumber \\ 
    & \quad \quad + \sum_{z^* \in E^+} \sum_{\substack{Q \in CZ_{\text{rel}}, \\Q \in \GP(z^*)}} \sum_{|\al| \le 1} \delta_Q^{|\al|p-2p} \int_{1.1Q} |\p^\al (F - L_Q^{\#})|^p \, dz \nonumber \\
    &\le C_p \sum_{Q \in \CZ} \int_{Q} [\Mcal (|\nabla^2 F|)]^p \, dz \nonumber \\
    &= C_p \int_{Q^0} [\Mcal (|\nabla^2 F|)]^p \, dz \le C_p' \int_{\R^2} |\nabla^2 F|^p \, dz. \label{eqn:norm12}
\end{align}
% \textcolor{red}{To get to the second line, we additionally used the fact that the sets $\Gp(z^*)$ are disjoint across different $z^* \in E^+$ (and these sets are also disjoint from the set of \easy{} CZ cubes), and the third line line follows since the different CZ cubes are disjoint.}{\color{blue} I recommend removing this, but if we keep it, I recommend editing it because ``get to" is a bit informal, and we shouldn't refer to the ``second" or ``third" lines. Instead, I've highlighted a line in blue above that already does what the referee is asking for; I moved it from after \eqref{eqn:norm10} to its current location.}{\color{purple}A: I agree that with the blue highlighted line above, there should be no need to say anything more. We can just point this out in the response to the referee}
Recall that for each $Q \in \CZ$, there are at most $C$ distinct $Q' \in \CZ$ such that $Q \lra Q'$; those $Q'$ satisfy $\frac{1}{2} \delta_Q \le \delta_{Q'} \le 2\delta_Q$. Consequently, \eqref{eqn:norm12} implies that
\begin{equation*}
    \sum_{|\al| \le 1} \sum_{\substack{Q, Q' \in \CZ_{\text{rel}}\\Q \lra Q'}} \delta_Q^{|\al|p-2p} \left\{ \int_{1.1Q} |\p^\al (F - L_Q^{\#})|^p \, dz + \int_{1.1Q'} |\p^\al (F - L_{Q'}^{\#})|^p \, dz \right\} \le C_p \int_{\R^2} |\nabla^2 F|^p \, dz,
\end{equation*}
which in turn implies (by the triangle inequality) that
\begin{equation}\label{eqn:norm13}
    \sum_{|\al| \le 1} \sum_{\substack{Q, Q' \in \CZ_{\text{rel}} \\Q \lra Q'}} \delta_Q^{|\al|p-2p} \int_{1.1Q \cap 1.1Q'} |\p^\al (L_Q^{\#} - L_{Q'}^{\#})|^p \, dz \le C_p \int_{\R^2} |\nabla^2 F|^p \, dz.
\end{equation}
Next, we define
\begin{equation}\label{eqn:norm14}
    L_Q (x, y) := f^+ (\uz(Q)) + \frac{f^+(z_2 (Q)) - f^+(z_1 (Q))}{x_2 (Q) -x_1 (Q)} \cdot (x - \ux (Q))
\end{equation}
for $Q \in \CZ$ \rel{}. In particular, $L_Q (x, y)$ is independent of $y$, and $L_Q (x, y) = L_Q^\# (x, 0)$.
% {\color{blue} I know the referee recommended this, but I think it'd be better to write $L_Q (x, y) = L_Q (x, 0)$ as this is immediate from its definition (10.16), and there's no reason to verify agreement with $L_Q^\#$; it's distracting, I think. Whichever you choose is fine by me though.}

Recall that each $Q \in \CZ$ has the form $I \times [0, |I|]$, $I \times [|I|, 2|I|]$, $I \times [-|I|, 0]$, or $I \times [-2|I|, -|I|]$ for an interval $I$. Hence, Lemma \ref{lem:s2} applies to $Q$, and combined with {\eqref{eqn:norm13}}, \eqref{eqn:norm14}, this yields
\begin{equation}\label{eqn:norm15}
    \sum_{|\al| \le 1} \sum_{\substack{Q, Q' \in \CZ_{\text{rel}}\\Q \lra Q'}} \delta_Q^{|\al|p-2p} \int_{1.1Q \cap 1.1Q'} |\p^\al (L_Q - L_{Q'})|^p \, dz \le C_p \int_{\R^2} |\nabla^2 F|^p \, dz.
\end{equation}
By inspecting \eqref{eqn:norm14}, we see that the left-hand side of \eqref{eqn:norm15} is determined by $f^+$, independently of $F$. 

Conversely, it is easy to exhibit a function $\tF \in L^{2,p} (\Qinner)$ such that $\tF = f^+$ on $E$, and $\norm{\tF}_{L^{2,p} (\Qinner)}^p$ is dominated by the left-hand side of \eqref{eqn:norm15}. To produce $\tF$, we take a Whitney partition of unity $\{ \theta_Q \}_{Q\in \CZ}$ adapted to our Calder\'on-Zygmund decomposition. Specifically, {the $C^2$ functions} $\theta_Q$ satisfy
\begin{equation*}
    \sum_{Q \in \CZ_{\text{rel}}} \theta_Q = 1 \quad \text{ on } \Qinner,
\end{equation*}
and each $\theta_Q$ is supported on $1.1Q$ with $|\p^\al \theta_Q| \le C\delta_Q^{-|\al|}$ for $|\al| \le 2$. Given $f^+ : E^+ \to \R$, we then define
\[
    \tF := \sum_{Q \in \CZ_{\text{rel}}} \theta_Q L_Q \quad \text{ on } \Qinner,
\]
with $L_Q$ given by \eqref{eqn:norm14}.

Note that $\tF$ depends linearly on $f^+$, and that for each $z \in \Qinner$, $\tF(z)$ is determined by ${f^+}|_{S(z)}$ for a subset $S(z) \subset E^+$ with $\# (S(z)) \le C$.

Let us check that $\tF = f^+$ on $E$. Let $Q \in \CZ_{\text{rel}}$. If $1.1Q \cap E \neq \emptyset$, then we have defined $\uz(Q)$ to be the one and only point of $1.1Q \cap E$. Moreover, $L_Q(\uz(Q)) = f^+(\uz(Q))$, by definition \ref{eqn:norm14}. Therefore, $L_Q = f^+$ on $E \cap 1.1Q$ for each \rel{} $Q$. In particular, $L_Q = f^+$ on $E \cap \supp \theta_Q$. Since $\sum_Q \theta_Q = 1$ on $\Qinner$, it follows that
\begin{equation}\label{eqn:norm151}
    \tF(z) = \sum_Q \theta_Q(z) L_Q(z) = \sum_Q \theta_Q(z) f^+(z) = f^+(z) \quad \quad (z \in E \subset \Qinner),
\end{equation}
as claimed.

Now, by the Patching Estimate \eqref{eq:patching}, we see that
\begin{equation*}
    \norm{\tF}_{L^{2,p} (\Qinner)}^p \le C_p \sum_{Q \in \CZ_{\text{rel}}} \norm{L_Q}_{L^{2,p} (1.1Q)}^p + C_p \sum_{|\al| \le 1} \sum_{\substack{Q, Q' \in \CZ_{\text{rel}}\\Q \lra Q'}} \delta_Q^{|\al|p-2p} \int_{1.1Q \cap 1.1Q'} |\p^\al (L_Q - L_{Q'})|^p \, dz.
\end{equation*}
Since each $L_Q$ is a first-degree polynomial, we have $\norm{L_Q}_{L^{2,p} (1.1Q)}^p = 0$ for all $Q \in CZ_{\text{rel}}$. Hence,
\begin{equation}\label{eqn:norm16}
    \norm{\tF}_{L^{2,p} (\Qinner)}^p \le C_p \sum_{|\al| \le 1} \sum_{\substack{Q, Q' \in \CZ_{\text{rel}}\\Q \lra Q'}} \delta_Q^{|\al|p-2p} \int_{1.1Q \cap 1.1Q'} |\p^\al (L_Q - L_{Q'})|^p \, dz.
\end{equation}
Thus, ${T^+} : f^+ \mapsto \tF$ is a linear map from functions $f^+$ on $E^+$ to functions $\tF \in L^{2,p} (\Qinner)$, with $\tF(z)$ determined by the values of $f^+$ from at most $C$ points of $E^+$. Moreover, $T^+$ satisfies $\tF = f^+$ on $E$, as well as \eqref{eqn:norm16}.

Next, we investigate circumstances in which a summand in \eqref{eqn:norm15}, \eqref{eqn:norm16} is identically zero. Those summands arise from $(Q,Q')$ with $Q \lra Q'$ and $Q, Q' \in \CZ_{\text{rel}}$; suppose such $(Q, Q')$ are also both \hard{} and $1.1Q \cap E = 1.1Q' \cap E = \emptyset$. By Lemma \ref{lem:9.5}, we have $Q, Q' \in \GP(z^*)$ for some $z^*$. According to our discussion in Section \ref{sec:special pts}, we have $z_i (Q) = z_i (\Qbig (z^*)) = z_i (Q')$ and $\uz(Q) = \uz(\Qsmall (z^*)) = \uz(Q')$. So $x_i (Q) = x_i (Q')$ and $\ux(Q) = \ux(Q')$, which means by \eqref{eqn:norm14}, we have $L_Q = L_{Q'}$. So the summand in \eqref{eqn:norm15}, \eqref{eqn:norm16} arising from such a pair $(Q, Q')$ is identically zero. {In summary, combining the estimates \eqref{eqn:norm15} and \eqref{eqn:norm16} with the discussion herein, we have proved the following.}

\begin{lem}\label{lem:norm}
    Let
    \begin{equation}\label{eqn:norm}
        \vertiii{f^+}_+^p := \sum_{|\al| \le 1} \sum_{\substack{Q, Q' \in \CZ_{\text{rel}}, \; Q \lra Q'\\Q \text{ or } Q' \; \easy{}, \text{ or }\\1.1Q \cap E \neq \emptyset \text{ or } 1.1Q' \cap E \neq \emptyset }} \delta_Q^{|\al|p-2p} \int_{1.1Q \cap 1.1Q'} |\p^\al (L_Q - L_{Q'})|^p \, dz.
    \end{equation}
    Suppose $\tF \in L^{2,p} (\R^2)$ with $\tF = f^+$ on $E^+$. Then
    \begin{equation*}
        \vertiii{f^+}_+^p \le C_p \norm{\tF}_{L^{2,p} (\R^2)}^p.
    \end{equation*}
    Conversely, the linear map {$T^+:f^+\mapsto \tilde F$ taking functions $f^+$ on $E^+$ to functions $\tilde F\in L^{2,p} (\Qinner)$} defined by \eqref{eqn:norm151} satisfies
    \begin{gather*}
        T^+ f^+ (z) = f^+ (z) \quad (z \in E), \text{ and } \\
        \norm{T^+ f^+}^p_{L^{2,p} (\Qinner)} \le C_p \vertiii{f^+}_+^p.
    \end{gather*}
    Moreover, for any $z \in \Qinner$, $T^+ f^+(z)$ is determined by $f^+\mid_{S(z)}$, for a subset $S(z) \subset E^+$ with $\# (S(z)) \le C$.
\end{lem}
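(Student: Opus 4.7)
My plan is to assemble the conclusion from the estimates already derived in this section, with one further observation that many summands in \eqref{eqn:norm15} and \eqref{eqn:norm16} vanish. Concretely, the full seminorm appearing on the left of \eqref{eqn:norm15} and the right of \eqref{eqn:norm16} ranges over \emph{all} pairs $Q\lra Q'$ in $\CZ_{\text{rel}}$, whereas $\vertiii{f^+}_+^p$ restricts this range to pairs for which at least one of $Q,Q'$ is \easy{} or at least one of $1.1Q\cap E$, $1.1Q'\cap E$ is non-empty. So the first task is to argue that the omitted pairs contribute $0$.

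To justify this, I would fix a pair $Q\lra Q'$ in $\CZ_{\text{rel}}$ with $Q$ and $Q'$ both \hard{} and $1.1Q\cap E = 1.1Q'\cap E = \emptyset$. Because both squares are \hard{}, Lemma \ref{l:relevant-hard} assigns each to some $\GP(z^*)$; Lemma \ref{lem:9.5} applied to $Q\lra Q'$ forces the two $z^*$'s to coincide, so $Q,Q'\in \GP(z^*)$ for a common $z^*\in E^+$. Then the rules of Section \ref{sec:special pts} yield $z_i(Q)=z_i(\Qbig(z^*))=z_i(Q')$ for $i=1,2$, and $\uz(Q)=\uz(z^*)=\uz(Q')$ (invoking case \eqref{enum:uz3} precisely because $1.1Q\cap E = \emptyset$, and likewise for $Q'$). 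Substituting into the definition \eqref{eqn:norm14} of $L_Q$ shows $L_Q\equiv L_{Q'}$, whence $\partial^\alpha(L_Q-L_{Q'})\equiv 0$ on $1.1Q\cap 1.1Q'$ for $|\alpha|\le 1$. Hence restricting the outer sums in \eqref{eqn:norm15} and \eqref{eqn:norm16} to the index set of \eqref{eqn:norm} does not change their value.

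Granting the vanishing observation, the lower bound $\vertiii{f^+}_+^p \le C_p\|\tilde F\|_{L^{2,p}(\R^2)}^p$ is immediate from \eqref{eqn:norm15}. For the upper bound on $T^+f^+$, the Patching Estimate gave us \eqref{eqn:norm16}, whose right-hand side now coincides with $C_p\vertiii{f^+}_+^p$ after the restriction. The interpolation identity $T^+f^+ = f^+$ on $E$ is exactly the content of \eqref{eqn:norm151}: for any $z\in E\cap 1.1Q$ with $Q\in \CZ_{\text{rel}}$, the definition of $\uz(Q)$ in case \eqref{enum:uz1} forces $z=\uz(Q)$, so $L_Q(z)=f^+(\uz(Q))=f^+(z)$, and the partition-of-unity telescoping yields the claim.

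The locality assertion $\#S(z)\le C$ is a consequence of the construction. For fixed $z\in\Qinner$, only the boundedly many CZ squares $Q$ with $z\in 1.1Q$ contribute to $T^+f^+(z)=\sum_Q \theta_Q(z)L_Q(z)$; each such $L_Q(z)$ is determined by the values of $f^+$ at $\uz(Q), z_1(Q), z_2(Q)\in E^+$, giving at most a universal constant number of points in total. No step here is hard — the substance of the lemma was produced in the preceding derivation, and the only genuinely new content is the vanishing argument above, which is itself a short unwinding of the definitions in Section \ref{sec:special pts}.
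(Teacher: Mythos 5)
Your proof is correct and follows the same approach as the paper: it assembles the conclusion from \eqref{eqn:norm15}, \eqref{eqn:norm16}, and \eqref{eqn:norm151}, and supplies the one additional observation that the excluded pairs $(Q,Q')$ — both \hard{} with $1.1Q\cap E=1.1Q'\cap E=\emptyset$ — yield $L_Q\equiv L_{Q'}$ via Lemmas \ref{l:relevant-hard}, \ref{lem:9.5} and the definitions of $z_i(Q)$, $\uz(Q)$, so those summands vanish identically. The reasoning, including the locality count via the bounded overlap of the supports of the $\theta_Q$, matches the paper's derivation.
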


In the definition of $\triplenorm{f}_+$ in the above lemma, there are at most $C \cdot \# E$ summands thanks to Lemma \ref{lem:9.6}. Moreover, from Lemma \ref{lem:zQ in CQ}, we have $\uz(Q) \in CQ$, so we can apply Lemma \ref{lem:t10} to control each summand as follows. 
%M:{I removed ``$Q \leftrightarrow Q'$ for each term in the summation \eqref{eqn:norm}" because the referee asked for verification of this particular hypothesis, and because we repeatedly use that the cubes are adjacent without referencing this fact.}

Since $\partial_y (L_Q - L_{Q'}) = 0$,

\begin{align*}
    &\partial_x (L_Q - L_{Q'}) = \frac{f^+ (z_2 (Q)) - f^+ (z_1 (Q))}{x_2 (Q) - x_1 (Q)} - \frac{f^+ (z_2 (Q')) - f^+ (z_1 (Q'))}{x_2 (Q') - x_1 (Q')}, \text{ and}\\
    &(L_Q - L_{Q'}) (\uz(Q')) = f^+ (\uz(Q')) - f^+ (\uz(Q)) - \frac{\ux(Q') - \ux(Q)}{x_2(Q) - x_1(Q)}  \left(f^+ (z_2 (Q)) - f^+ (z_1 (Q))\right),
\end{align*}
we find that
\begin{equation*}
    \sum_{|\al| \le 1} \delta_Q^{|\al|p - 2p} \int_{1.1Q \cap 1.1Q'} |\p^\alpha (L_Q - L_{Q'})|^p \, dz
\end{equation*}
and
\begin{multline*}
    \delta_Q^2 \left|\delta_Q^{-2} \left( f^+ (\uz(Q')) - f^+ (\uz(Q)) - \frac{\ux(Q') - \ux(Q)}{x_2(Q) - x_1(Q)}  \left( f^+ (z_2 (Q)) - f^+ (z_1 (Q))\right) \right)\right|^p \\
    + \delta_Q^2 \left|\delta_Q^{-1} \left( \frac{f^+ (z_2 (Q)) - f^+ (z_1 (Q))}{x_2 (Q) - x_1 (Q)} - \frac{f^+ (z_2 (Q')) - f^+ (z_1 (Q'))}{x_2 (Q') - x_1 (Q')} \right)\right|^p
\end{multline*}
differ by at most a multiplicative factor $C_p$. {Combining this with the conclusions of Lemma \ref{lem:norm}, we have the following result. We let $X(E^+)$ denote the vector space of real-valued functions on $E^+$.}

\begin{lem}\label{l:data-dep}
    The map {$T^+: X(E^+) \to L^{2,p}(Q_\inner)$} given by $T^+f^+ :=\tilde F$ {as in \eqref{eqn:norm151}} satisfies the following:
    \begin{itemize}
        \item[(i)] $\tilde F = f^+$ on $E$;
        \item[(ii)] $\Vert \tilde F \Vert_{L^{2,p}(Q_{\inner})}^p \leq C_p \sum_{\nu=1}^{\nu_{\max}} \lambda_\nu \vert \ell_\nu^+(f^+)\vert^p$ for some $\lambda_\nu > 0$ and linear functionals $\ell_\nu^+:X(E^+)\to\R$;
        \item[(iii)] Any $F\in L^{2,p}(\R^2)$ with $F=f^+$ on $E^+$ satisfies
        \[
            \Vert F\Vert_{L^{2,p}{(\R^2)}}^p \geq c_p \sum_{\nu=1}^{\nu_{\max}} \lambda_\nu \vert\ell_\nu^+(f^+)\vert^p;
        \]
        \item[(iv)] $\lambda_\nu$ and {$\ell_\nu^+$} depend only on {$E^+$; neither depends on $p$ or $f^+$}.
        \item[(v)] For every $z\in Q_\inner$, there exists a subset $S(z)\subset E^+$ with $\# S(z)\leq C$, such that $\tilde F(z)$ depends only on $f^+|_{S(z)}$; 
        \item[(vi)] For each $\nu\in \{1,\dots,\nu_{\max}\}$, there exists a subset $S_\nu\subset E^+$ with $\#S_\nu\leq C$, such that {$\ell_\nu^+(f^+)$} depends only on {$f^+|_{S_\nu}$};
        \item[(vii)] $\nu_{\max}\leq CN$, where $N=\# E$.
    \end{itemize}
\end{lem}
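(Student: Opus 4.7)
The plan is essentially to package Lemma~\ref{lem:norm} together with the pointwise comparison of norms displayed immediately above the statement. Items (i) and (v) are immediate: by the construction \eqref{eqn:norm151} of $\tilde F$, we have $T^+ f^+ = f^+$ on $E$, and each value $\tilde F(z) = \sum_Q \theta_Q(z) L_Q(z)$ is determined by the values of $f^+$ at $\uz(Q), z_1(Q), z_2(Q)$ for the bounded number of relevant CZ squares $Q$ with $1.1Q \ni z$. This gives $\# S(z) \le C$.

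For the remaining items, I would rewrite each summand of $\vertiii{f^+}_+^p$ in \eqref{eqn:norm} using Lemma~\ref{lem:t10} with evaluation point $\uz(Q') \in CQ' \subset CQ$ (the latter containment following from Lemma~\ref{lem:zQ in CQ} and the comparable side lengths of $Q, Q'$ implied by $Q \lra Q'$). The resulting expression for the pair $(Q, Q')$ is comparable, up to a multiplicative factor $C_p$, to
\[
    \delta_Q^{2-2p}\,|A_{Q,Q'}(f^+)|^p \;+\; \delta_Q^{2-p}\,|B_{Q,Q'}(f^+)|^p,
\]
where $A_{Q,Q'}$ and $B_{Q,Q'}$ are the two explicit linear combinations of $f^+$ displayed just before the statement, each involving $f^+$ at no more than six points of $E^+$.

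To secure $p$-independence of the data (item (iv)), I would further reparametrize
\[
    \delta_Q^{2-2p}|A|^p = \delta_Q^2 \,|\delta_Q^{-2} A|^p, \qquad \delta_Q^{2-p}|B|^p = \delta_Q^2 \,|\delta_Q^{-1} B|^p,
\]
and assign two indices $\nu$ to each admissible pair $(Q, Q')$: in both cases set $\lambda_\nu := \delta_Q^2$, and take $\ell_\nu^+ := \delta_Q^{-2} A_{Q,Q'}$ or $\ell_\nu^+ := \delta_Q^{-1} B_{Q,Q'}$ respectively. Every ingredient -- the CZ squares, their side lengths, and the special-point assignments of Section~\ref{sec:special pts} -- is defined without reference to $p$ or $f^+$, so (iv) holds, along with (vi) with $\# S_\nu \le 6$. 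For (vii), the sum in \eqref{eqn:norm} is restricted to pairs $Q \lra Q'$ in which at least one of $Q, Q'$ is \easy{} or meets $E$; by Lemma~\ref{lem:9.6} there are at most $CN$ such $Q$, each with at most $C$ neighbors, so $\nu_{\max} \le CN$. Items (ii) and (iii) follow by combining this two-sided pointwise comparison with the corresponding bounds supplied by Lemma~\ref{lem:norm}.

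The main obstacle is not deep but requires care: verifying that Lemma~\ref{lem:t10} genuinely applies on each pair (through the $\uz(Q') \in CQ$ check above), and ensuring that the $p$-independent reparametrization really is valid simultaneously for every $p \in (1,2)$ -- this last point works precisely because the entire CZ machinery is constructed once and for all from $E^+$. Everything else is routine bookkeeping, with the final constants $C_p, c_p$ absorbing the multiplicative factors introduced by the rescalings by powers of $\delta_Q$.
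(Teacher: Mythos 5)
Your proposal is correct and follows essentially the same route as the paper: the paper also uses Lemma~\ref{lem:t10} (with the evaluation point $\uz(Q')$, justified via Lemma~\ref{lem:zQ in CQ} and the good geometry of neighboring CZ squares) to rewrite each summand of $\vertiii{f^+}_+^p$ as $\delta_Q^2|\delta_Q^{-2}(\cdot)|^p + \delta_Q^2|\delta_Q^{-1}(\cdot)|^p$, reads off the $p$-independent coefficients $\lambda_\nu = \delta_Q^2$ and linear functionals, and invokes Lemmas~\ref{lem:norm} and~\ref{lem:9.6} for the two-sided norm comparison and the $\nu_{\max} \le CN$ count. Your spelling-out of the reparametrization and the index assignment is just making explicit what the paper leaves implicit in the sentence ``Combining this with the conclusions of Lemma~\ref{lem:norm}.''
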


\section{Removing the extra point from $E^+$} \label{sec:rempt}
{Observe that Lemma \ref{l:data-dep} concludes Theorem \ref{t:main-reduction}, up to getting rid of the extra point $(-1,0)\in E^+\setminus E$.} To do so, we introduce a trivial extension operator as follows.

{Recall that $E\subset [-2^{-11},2^{-11}]\times \{0\}\subset \R^2$ and that $(2^{-11},0), (-2^{-11},0)\in E$.} Given a function $F\in L^{2,p}(Q_\inner)$, we let $L_F$ be the unique first degree polynomial that agrees with $F$ at the points $(-2^{-11},0), \ (2^{-11},0)$ and $(2^{-11}, 2^{-11})$. Then, using Corollaries \ref{cor:t1}-\ref{cor:t3}, the Hardy-Littlewood Maximal Theorem, and that $\delta_{Q_{inner}}=2^{-9}$, we have 

\begin{equation}\label{e:Sobolev-add-endpts}
    \Vert \partial^\alpha(F-L_F)\Vert_{L^p(Q_\inner)} \leq C_p \Vert F \Vert_{L^{2,p}(Q_\inner)} \qquad \forall |\alpha|\leq 1
\end{equation}
and $L_F|_{\R\times \{0\}}$ is determined by $F( 2^{-11},0)$ and $F(-2^{-11},0)$, {with linear dependency}. In particular,
\begin{equation}\label{e:LI-add-endpts}
    L_F(-1,0) = a_+ F(2^{-11},0) + a_- F(-2^{-11},0),
\end{equation}
for universal constants $a_+, a_-$.

Now fix a cutoff $\chi\in C^2(\R^2)$ supported in $Q_\inner$ with $\chi =1$ on $[-2^{-11},2^{-11}]\times\{0\}$ and $\Vert \chi \Vert_{C^2(\R^2)}\leq C$. Define {$\Ecal: L^{2,p}(Q_\inner) \to L^{2,p}(\R^2)$ by}
\[
     \Ecal F = \chi F + (1-\chi)L_F.
\]
By \eqref{e:Sobolev-add-endpts}, \eqref{e:LI-add-endpts} and the definition of $L_F$ and $\chi$, for every $F\in L^{2,p}(Q_\inner)$ we have
\begin{itemize}
    \item[(1)] $\Ecal F= F$ on $[-2^{-11}, 2^{-11}]\times \{0\}$;
    \item[(2)] $\Vert \Ecal F \Vert_{L^{2,p}(\R^2)}\leq C_p \Vert F \Vert_{L^{2,p}(Q_\inner)}$;
    \item[(3)] $\Ecal F(-1,0) = a_+ F(2^{-11},0) + a_- F(-2^{-11},0)$.
\end{itemize}

Now suppose that $f: E\to \R$ is given. Define $f^+:E^+\to \R$ by $f^+=f$ on $E$ and
\[
    f^+(-1,0)=a_+ f(2^{-11},0) + a_- f(-2^{-11},0).
\]
Then define
\[
    \tilde F = T^+ f^+ \in L^{2,p}(Q_\inner), \qquad \ell_\nu(f) = \ell_\nu^+(f^+),
\]
with $T^+$ and $\ell_\nu^+$ as in Lemma \ref{l:data-dep}. Then, the conclusion of Lemma \ref{l:data-dep} implies that $\tilde F = f$ on $E$ and
\[
    \Vert \tilde F \Vert_{L^{2,p}(Q_\inner)}^p \leq C_p \sum_{\nu =1}^{\nu_{\max}} \lambda_\nu \vert \ell_\nu(f) \vert^p.
\]
Moreover, for any $F\in L^{2,p}(Q_\inner)$ with $F=f$ on $E$, letting $F^+:= \Ecal F$ and using properties (1), (2) above together with Lemma \ref{l:data-dep}(iii) yields

\[
    \sum_{\nu=1}^{\nu_{\max}} \lambda_\nu \vert \ell_\nu(f)\vert^p = \sum_{\nu=1}^{\nu_{\max}} \lambda_\nu \vert \ell_\nu^+(f^+)\vert^p \leq C_p \Vert F^+ \Vert_{L^{2,p}(\R^2)}^p \leq C_p \Vert F \Vert_{L^{2,p}(Q_\inner)}^p.
\]
Setting $Tf := T^+ f^+$, we summarize what we have just shown in the following lemma.

\begin{lem}\label{l:main-loc-image}
    Let $E\subset [-2^{-11},2^{-11}]\times \{0\}\subset \R^2$ be finite with $N= \# E$ and $(2^{-11},0), (-2^{-11},0) \in E$. Then there exist a linear map $T:X(E)\to L^{2,p}(Q_\inner)$, a positive integer $\nu_{\max}$, positive coefficients $\lambda_1,\dots,\lambda_{\nu_{\max}}$ and linear functionals $\ell_1,\dots,\ell_{\nu_{\max}}: X(E) \to \R$ such that the following holds. Given any $f\in X(E)$,
    \begin{itemize}
        \item[(i)] $Tf=f$ on $E$;
        \item[(ii)] $\Vert Tf \Vert_{L^{2,p}(Q_\inner)}^p \leq C_p \sum_{\nu=1}^{\nu_{\max}} \lambda_\nu \vert \ell_\nu(f)\vert^p$;
        \item[(iii)] Any $F\in L^{2,p}(Q_\inner)$ with $F=f$ on $E$ satisfies
        \[
            \Vert F \Vert_{L^{2,p}(Q_\inner)}^p \geq c_p \sum_{\nu=1}^{\nu_{\max}} \lambda_\nu \vert \ell_\nu(f)\vert^p;
        \]
        \item[(iv)] $\lambda_\nu$ and $\ell_\nu$ depend only on $E$, not on $p$ or $f$;
        \item[(v)] For every $z\in Q_\inner$, there exists a subset $S(z)\subset E$ with $\# S(z) \leq C$, such that $Tf(z)$ is determined only by $f|_{S(z)}$;
        \item[(vi)] For each $\nu$, there exists a subset $S_\nu\subset E$ with $\# S_\nu \leq C$, such that $\ell_\nu(f)$ is determined only by $f|_{S_\nu}$;
        \item[(vii)] $\nu_{\max} \leq CN$.
    \end{itemize}
\end{lem}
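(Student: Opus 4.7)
The plan is to verify properties (i)--(vii) directly by combining the operator $T^+$ from Lemma \ref{l:data-dep} with the auxiliary extension $\Ecal$ constructed just above the lemma statement. Specifically, I would set $Tf := T^+ f^+$ and $\ell_\nu(f) := \ell_\nu^+(f^+)$, where $f^+ \in X(E^+)$ extends $f$ by $f^+(-1,0) := a_+ f(2^{-11},0) + a_- f(-2^{-11},0)$ using the universal constants $a_\pm$ from \eqref{e:LI-add-endpts}, and I would inherit the positive coefficients $\lambda_\nu$ unchanged from Lemma \ref{l:data-dep} applied to $E^+$.

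Most of the conclusions are then bookkeeping. Properties (i), (ii), and (iv) follow immediately from the corresponding parts of Lemma \ref{l:data-dep} together with the fact that $f \mapsto f^+$ is linear, depends only on $E$ (not on $p$ or $f$), and agrees with $f$ on $E$. For (v) and (vi), I would take the localization set $S^+(z) \subset E^+$ (respectively $S_\nu^+ \subset E^+$) produced by Lemma \ref{l:data-dep} and, if it contains the extra point $(-1,0)$, replace that point by $\{(2^{-11},0),(-2^{-11},0)\} \subset E$; the resulting subset of $E$ has cardinality at most $\# S^+(z) + 1$, which is still bounded by a universal constant. Property (vii) is immediate since $\# E^+ = N+1 \leq 2N$ (using $N \geq 2$).

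The one substantive step is the lower bound (iii). Here the idea is to lift a candidate extension $F \in L^{2,p}(Q_\inner)$ of $f$ to a function $F^+ := \Ecal F \in L^{2,p}(\R^2)$ that agrees with $f^+$ on all of $E^+$. By property (1) of $\Ecal$, $F^+ = F = f$ on $E$; by property (3) of $\Ecal$ together with the definition of $f^+(-1,0)$,
\begin{equation*}
F^+(-1,0) = a_+ F(2^{-11},0) + a_- F(-2^{-11},0) = a_+ f(2^{-11},0) + a_- f(-2^{-11},0) = f^+(-1,0).
\end{equation*}
Hence $F^+ = f^+$ on all of $E^+$, so applying Lemma \ref{l:data-dep}(iii) to $F^+$ and then using property (2) of $\Ecal$ to estimate $\Vert F^+ \Vert_{L^{2,p}(\R^2)} \leq C_p \Vert F \Vert_{L^{2,p}(Q_\inner)}$ yields (iii) after absorbing constants.

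I do not expect any serious obstacle: the entire construction preceding the lemma was arranged precisely so that the trivial extension $\Ecal$ reads the value it assigns at the deleted point $(-1,0)$ as the same linear combination of endpoint data used to define $f^+(-1,0)$. The main care required is simply matching the scalars $a_\pm$ in the two places, so that the lift $F \mapsto F^+$ is compatible with the projection $f \mapsto f^+$; everything else reduces to invoking Lemma \ref{l:data-dep} and the three listed properties of $\Ecal$.
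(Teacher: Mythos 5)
Your proposal is correct and follows essentially the same route as the paper: define $f^+$ by appending the value $a_+ f(2^{-11},0) + a_- f(-2^{-11},0)$ at $(-1,0)$, set $Tf := T^+ f^+$ and $\ell_\nu(f) := \ell_\nu^+(f^+)$, inherit $\lambda_\nu$, and prove (iii) by lifting a candidate $F$ to $F^+ := \Ecal F$ and checking $F^+ = f^+$ on $E^+$ via properties (1) and (3) of $\Ecal$ before invoking Lemma \ref{l:data-dep}(iii) and property (2). The only (minor and correct) elaboration beyond the paper is your explicit handling of (v)--(vi) by replacing the extra point $(-1,0)$ in $S^+(z)$ or $S_\nu^+$ with the two endpoints of $E$.
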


\begin{proof}[Proof of Theorem \ref{t:main-reduction}]
    By translation and dilation, we can assume that the finite set $E$ satisfies the hypotheses of Lemma \ref{l:main-loc-image}. Given $f\in X(E)$, let $T^\sharp f:=(\Ecal\circ T)(f)$, for $T$ given by Lemma \ref{l:main-loc-image}. Then $T^\sharp f\in L^{2,p}(\R^2)$ and $T^\sharp f = Tf = f$ on $E$, with
    \begin{equation}
        \Vert T^\sharp f \Vert_{L^{2,p}(\R^2)}^p \leq {C_p} \Vert Tf \Vert_{L^{2,p}(Q_\inner)}^p \leq C'_p \sum_{\nu=1}^{\nu_{\max}} \lambda_\nu \vert \ell_\nu(f) \vert^p. \label{eq:ef}
    \end{equation}
    In addition, for any $F\in L^{2,p}(\R^2)$ satisfying $F=f$ on $E$ we clearly have
    \[
        \Vert F \Vert_{L^{2,p}(\R^2)}^p \geq \Vert F \Vert_{L^{2,p}(Q_\inner)}^p \geq c_p \sum_{\nu=1}^{\nu_{\max}} \lambda_\nu \vert \ell_\nu(f)\vert^p
    \]
    This verifies conclusions (a),(b) and (c). The validity of property (d) follows from the fact that given any point $z\in \R^2$, $T^\sharp f(z)$ depends only on $Tf$ evaluated at the points $(\pm 2^{-11},0)$, $({2^{-11}}, 2^{-11})$ and $z$ (where the latter dependency is only if $z\in Q_\inner$). Thanks to conclusion (v) of Lemma \ref{l:main-loc-image}, (d) indeed follows.

    In light of (vi) and (vii) of Lemma \ref{l:main-loc-image}, conclusions (e) and (f) follow. This completes the proof.
\end{proof}

\textbf{Conflict of interest statement:} The authors have no conflict of interest to declare.

\textbf{Data availability:} No data was collected as part of the project.

\newpage
\bibliographystyle{amsplain}
\bibliography{main}

\providecommand{\bysame}{\leavevmode\hbox to3em{\hrulefill}\thinspace}
\providecommand{\MR}{\relax\ifhmode\unskip\space\fi MR }
% \MRhref is called by the amsart/book/proc definition of \MR.
\providecommand{\MRhref}[2]{%
  \href{http://www.ams.org/mathscinet-getitem?mr=#1}{#2}
}
\providecommand{\href}[2]{#2}
\begin{thebibliography}{10}

\bibitem{ck}
P.~B. Callahan and S.~R. Kosaraju, \emph{A decomposition of multi-dimensional
  point sets with applications to k-nearest neighbors and n-body potential
  fields}, Journal of the Association for Computing Machinery (1995), no.~1,
  67–90.

\bibitem{CI23}
J.~Carruth and A.~Israel, \emph{An example related to {W}hitney's extension
  problem for {$L^{2,p}(\mathbb{R}^2)$} when $1<p<2$}, 2023.

\bibitem{Drake}
M.~K. Drake, \emph{Approximate extension in {S}obolev space}, Advances in
  Mathematics \textbf{420} (2023), 108999.

\bibitem{arie4}
C.~Fefferman, A.~Israel, and G.~K. Luli, \emph{The structure of {S}obolev
  extension operators}, Rev. Mat. Iberoamericana \textbf{30} (2012), no.~2,
  419--429.

\bibitem{arie3}
\bysame, \emph{{S}obolev extension by linear operators}, J. Amer. Math. Soc.
  \textbf{27} (2014).

\bibitem{arie6}
\bysame, \emph{Fitting a {S}obolev function to data {I}}, Rev. Mat. Iberoam.
  \textbf{32} (2016), no.~1, 273--374.

\bibitem{arie7}
\bysame, \emph{Fitting a {S}obolev function to data {II}}, Rev. Mat. Iberoam.
  \textbf{32} (2016), no.~2, 649--750.

\bibitem{arie8}
\bysame, \emph{Fitting a {S}obolev function to data {III}}, Rev. Mat. Iberoam.
  \textbf{32} (2016), no.~3, 1039--1126.

\bibitem{FK23}
C.~Fefferman and B.~Klartag, \emph{Linear extension operators for sobolev
  spaces on radially symmetric binary trees}, Advanced Nonlinear Studies
  \textbf{23} (2023), no.~1, 20220075.

\bibitem{gt}
D.~Gilbarg and N.~S. Trudinger, \emph{Elliptic partial differential equations
  of second order}, Springer, 1998.

\bibitem{arie5}
A.~Israel, \emph{A bounded linear extension operator for
  {$L^{2,p}(\mathbb{R}^2)$}}, Ann. of Math. \textbf{178} (2013), no.~1,
  183--230.

\bibitem{Stein-HA}
E.~M. Stein, \emph{Harmonic analysis: real-variable methods, orthogonality, and
  oscillatory integrals}, Princeton Mathematical Series, vol.~43, Princeton
  University Press, Princeton, NJ, 1993, With the assistance of Timothy S.
  Murphy, Monographs in Harmonic Analysis, III. \MR{1232192}

\end{thebibliography}

\end{document}